%
%
%
%
%
\documentclass[12pt]{amsart}
\usepackage[utf8]{inputenc}
\usepackage{amssymb}
\usepackage{amsmath}
\usepackage{hyperref}
\usepackage{a4wide}
\usepackage{stackengine}

\DeclareMathOperator\id{\mathrm{id}}

\DeclareMathOperator\Span{\mathrm{span}}
\DeclareMathOperator\tr{\mathrm{tr}}

\newcommand\liegr{\mathsf}
\newcommand{\G}{\mbox{${\liegr G}_2$}}
\newcommand{\SO}[1]{\mbox{${\liegr {SO}}(#1)$}}
\newcommand{\SOxO}[2]{\mbox{$\liegr{S(O}(#1)\times{\liegr O}(#2))$}}
\newcommand{\Sp}[1]{\mbox{${\liegr {Sp}}(#1)$}}

\newcommand{\Spin}[1]{\mbox{${\liegr {Spin}}(#1)$}}
\newcommand{\SU}[1]{\mbox{${\liegr{SU}}(#1)$}}
\newcommand{\SUxU}[2]{\mbox{${\liegr {S(U}}(#1)\times{\liegr U}(#2))$}}
\newcommand{\U}[1]{\mbox{${\liegr U}(#1)$}}
\renewcommand{\O}[1]{\mbox{${\liegr O}(#1)$}}

\newcommand{\g}[1]{\mbox{$\mathfrak{#1}$}}
\newcommand{\La}{\mbox{$\mathfrak a$}}

\newcommand{\Lg}{\mbox{$\mathfrak g$}}
\newcommand{\Lh}{\mbox{$\mathfrak h$}}
\newcommand{\Lk}{\mbox{$\mathfrak k$}}
\newcommand{\Lm}{\mbox{$\mathfrak m$}}

\newcommand{\Lp}{\mbox{$\mathfrak p$}}

\newcommand{\Ls}{\mbox{$\mathfrak s$}}

\newcommand{\Lv}{\mbox{$\mathfrak v$}}
\newcommand{\Lw}{\mbox{$\mathfrak w$}}
\newcommand{\Lz}{\mbox{$\mathfrak z$}}

\newcommand\fieldsetc{\mathbb}
\newcommand{\C}{\fieldsetc{C}}
\newcommand{\F}{\fieldsetc{F}}
\newcommand{\R}{\fieldsetc{R}}

\renewcommand{\H}{\fieldsetc{H}}

\newcommand{\rotmat}[1]
{
   \begin{array}{|cr|}
     \hline
     \cos(#1) & -\sin(#1) \\
     \sin(#1) & \cos(#1) \\
     \hline
   \end{array}
}\newcommand{\onemat}[1]
{
   \begin{array}{|c|}
     \hline
     1 \\
     \hline
   \end{array}
}

\newtheorem{thm}{Theorem}[section]

\newtheorem{lem}[thm]{Lemma}
\newtheorem{prop}[thm]{Proposition}

\theoremstyle{remark}
\newtheorem{rmk}[thm]{Remark}

\newcommand\str{\rule[-.55em]{0em}{1.7em}}

\title[Stiefel manifolds]{Totally geodesic submanifolds and polar actions\\ on Stiefel manifolds}
\author[C.~Gorodski]{Claudio Gorodski}
\author[A.~Kollross]{Andreas Kollross}
\author[A.~Rodríguez-Vázquez]{Alberto Rodríguez-Vázquez}
\subjclass[2010]{Primary 53C30, Secondary 53C40, 57S15}
\keywords{Stiefel manifolds, totally geodesic, polar actions, cohomogeneity one}
\thanks{The first author acknowledges partial financial support from CNPq (grant  304252/2021-2) and FAPESP (grant 2023/00312-4). The third author has been supported by the projects PID2022-138988NB-I00 funded by MICIU/AEI/10.13039/501100011033 and by ERDF (European Union); and ED431F 2020/04 (Xunta de Galicia, Spain), the FWO Postdoctoral grant with project number 1262324N, and by the Horizon Europe research and innovation programme under Marie Sklodowska Curie Actions with grant agreement 101149711 - HOLYFLOW}

\address{Instituto de Matem\'atica e Estat\'\i stica,
Universidade de S\~ao Paulo,
Rua do Mat\~ao, 1010,
S\~ao Paulo, SP 05508-090, Brazil}
\email{claudio.gorodski@usp.br}

\address{Institut f\"{u}r Geometrie und Topologie, Universit\"{a}t Stuttgart, Pfaffenwaldring 57, 70550 Stuttgart, Germany}
\email{kollross@mathematik.uni-stuttgart.de}
	
\address{Départment de Mathématique,
	Université Libre de Bruxelles, Boulevard du Triomphe, CP 218,
	B-1050 Bruxelles, Belgium}
\email{alberto.rodriguez.vazquez@ulb.be}

\begin{document}

\pagenumbering{arabic}
\pagestyle{plain}
\begin{abstract}
We classify totally geodesic submanifolds of the real Stiefel manifolds of orthogonal two-frames. We also classify polar actions on these Stiefel manifolds, specifically, we prove that the orbits of polar actions are lifts of polar actions on the corresponding Grassmannian.  
In the case of cohomogeneity one actions we are able to obtain a classification for all real, complex and quaternionic Stiefel manifolds of $k$-frames.
\end{abstract}
\maketitle

\section{Introduction}

A connected submanifold $\Sigma$ of a Riemannian manifold $M$ is called \emph{totally geodesic} if every geodesic of~$\Sigma$ is also a geodesic of~$M$. In the case that $M$ is a space form, the classification of totally geodesic submanifolds is a well-known result. In fact, in a space of constant curvature, any linear subspace of the tangent space is tangent to a totally geodesic submanifold.
However, generic Riemannian manifolds do not have any totally geodesic proper submanifolds besides geodesics~\cite{murphy2019random}.
Between these two extremes: manifolds of the highest symmetry and manifolds without any symmetries, there lies the realm of Riemannian manifolds where we can expect a rich and interesting theory of totally geodesic submanifolds. For example, in Riemannian symmetric spaces, there are many different types of totally geodesic submanifolds.

Traditionally, totally geodesic submanifolds have been mostly studied in case $M$~is a Riemannian symmetric space. In such spaces, totally geodesic submanifolds can be described using the seemingly simple algebraic characterization by Lie triple systems. However the classification problem for totally geodesic submanifolds in symmetric spaces has only been solved in a few special cases, such as symmetric spaces of rank one~\cite{wolf1963elliptic}, and two~\cite{by1977totally,by1978totally,klein2008totally,klein2009totallyRS,klein2009totally,klein2010totally}, and products of rank-one symmetric spaces~\cite{R}. Under additional assumptions it has been solved for other symmetric spaces, such as exceptional symmetric spaces~\cite{kollross2023totally} (for maximal totally geodesic submanifolds), reflective submanifolds~\cite{leung,leung1979reflective,leung1979reflective4} (fixed point components of involutive isometries) and
non-semisimple maximal totally geodesic submanifolds~\cite{berndt2016maximal}.

Going beyond symmetric spaces, there have been few works in the past dealing with classification results on totally geodesic submanifolds in Riemannian homogeneous spaces, exceptions being the doctoral thesis~\cite{Kanschik2003} and the articles~\cite{kim2020totally} and \cite{Murphy}.
It is proved in~\cite{kim2020totally} that totally geodesic submanifolds of Damek-Ricci spaces are either locally isometric to rank-one symmetric spaces or non-symmetric Damek-Ricci spaces of lower dimension. In~\cite{Kanschik2003} and~\cite{Murphy}, totally geodesic submanifolds of Sasakian manifolds are investigated.

For the case of naturally reductive homogeneous spaces, there is an algebraic characterization of totally geodesic submanifolds due to Tojo~\cite{tojo}. This is a generalization of the Lie triple system condition for symmetric spaces. However, in general, it is not easy to exploit this condition in order to obtain classifications of totally geodesic submanifolds.

More recently, there has been renewed interest in developing intuition and tools for understanding totally geodesic submanifolds in non-symmetric homogeneous Riemannian manifolds, such as the complex, quaternionic and octonionic Hopf-Berger spheres~\cite{OR}. It is very interesting that in this classification some unexpected examples appear which are not orbits of groups of isometries of the ambient space, see also the remarks in~\cite{Kanschik2003} on the homogeneity of totally geodesic submanifolds in $\varphi$-symmetric Sasakian manifolds. As we see it,
this suggests that certain classes of non-symmetric homogeneous Riemannian
manifolds may accommodate more examples than previously thought.

In the present paper we consider a very restricted class of homogeneous manifolds which are very close to symmetric spaces, namely real Stiefel manifolds of orthonormal $2$-frames, which fiber over the real Grassmann manifolds of
$2$-planes, and we give a complete classification of maximal totally geodesic submanifolds; 
it turns out that all the examples are obtained from the Grassmannian, but our 
argument does not depend on the classification of
totally geodesic submanifolds of the Grassmannian. 
As mentioned above, some related previous work has been done in~\cite{Kanschik2003}
and~\cite{Murphy}. In the thesis~\cite{Kanschik2003}
there is a structural result on totally geodesic submanifolds of
$\varphi$-symmetric Sasakian manifolds (which include the real Stiefel
manifold of $2$-orthonormal frames), but it is not explicit and
assumes irreducibility of the submanifold. Also~\cite{Murphy}
studies totally geodesic submanifolds of Sasakian manifolds, but under
the assumption that the submanifold is $\varphi$-invariant,
$\varphi$-anti-invariant, or CR. 

Now we will state our main results. 
Let $M=V_2(\R^{n+2})$ be the real Stiefel manifold of pairs of orthonormal vectors in~$\R^{n+2}$, where $n\geq1$. 
Then $M = \O{n+2}/\O{n}=\SO{n+2}/\SO{n}$ is a homogeneous space and we will consider the normal homogeneous Riemannian metric induced by
\begin{equation}\label{eq:metric}
-\frac12\tr(XY), \quad \hbox{where $X,Y \in \g{so}_{n+2}$.}
\end{equation}
There is a homogeneous Riemannian submersion
\begin{equation}\label{fibration}
    F \longrightarrow M \stackon{$\longrightarrow$}{$\pi$} B
    \end{equation}
with totally geodesic fibers, where the base $B=G_2 (\R^{n+2})$ is the Grassmann manifold of $2$-planes in~$\R^{n+2}$ and the fiber $F$ is a circle.
As is well-known, the Stiefel manifold $V_2(\R^{n+2})$ is diffeomorphic to the unit tangent bundle of~$S^{n+1}$. In particular, $V_2(\R^4)$ is diffeomorphic to $S^3\times S^2$ and $V_2(\R^8)$ is diffeomorphic to $S^7\times S^6$, but the metric induced by~(\ref{eq:metric}) does not correspond to a product metric on the corresponding spheres, see~Remark~\ref{rmk:irreducible}. On the other hand, $V_2(\R^3)=\SO 3$ has constant curvature, so we need not consider it.  We show:

\begin{thm}\label{tg}
A connected complete maximal totally geodesic submanifold of the
Stiefel manifold $V_2(\R^{n+2})$, $n\geq2$, equipped with the metric induced by~(\ref{eq:metric}) is congruent to one of the following:
\begin{enumerate}
  \item a lower-dimensional Stiefel manifold~$V_2(\R^{n+1})$;
  \item the product of round spheres $S^p(1)\times S^q(1)$ with~$p+q=n$;
  \item the complex Hopf-Berger sphere~$S^{n+1}_{\mathbb C,\frac12}(\sqrt2)$ in case $n$~is even.
\end{enumerate}
In particular, all these submanifolds are reflective and thus closed.
Conversely, the above submanifolds are maximal totally geodesic submanifolds.
\end{thm}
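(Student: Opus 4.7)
The plan is to exploit the naturally reductive and Sasakian structures of $M = V_2(\R^{n+2})$, together with the Riemannian submersion~\eqref{fibration} onto the Grassmannian $B = G_2(\R^{n+2})$, and apply Tojo's algebraic criterion~\cite{tojo} for totally geodesic submanifolds in naturally reductive spaces.

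\emph{Setup.} Decompose $\g{g} = \g{so}(n+2) = \g{k} \oplus \g{m}$ with $\g{k} = \g{so}(n)$, and further $\g{m} = \g{z} \oplus \g{h}$, where $\g{z} = \R\xi$ is tangent to the circle fiber of $\pi$ and $\g{h} \cong \R^n \oplus \R^n$ is horizontal. A direct matrix computation in $\g{so}(n+2)$ yields
\[
[\xi,X]_\g{m} = \varphi(X), \qquad [X,Y]_\g{m} = \omega(X,Y)\,\xi \quad\text{for } X,Y \in \g{h},
\]
where $\varphi(x,y) = (-y,x)$ and $\omega$ is the corresponding symplectic form on $\g{h}$; there is also an explicit bilinear expression for the $\g{k}$-component $[X,Y]_\g{k} \in \g{so}(n)$. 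Thus $M$ is Sasakian with Reeb field $\xi$, and $\nabla_A B = \tfrac12 [A,B]_\g{m}$ on $G$-invariant fields.

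\emph{Step 1 (Dichotomy).} Let $\Sigma$ be a totally geodesic submanifold through $o = eK$ and $\g{s} = T_o\Sigma \subset \g{m}$. Tojo's criterion requires $[\g{s},\g{s}]_\g{m} \subset \g{s}$ and $[[\g{s},\g{s}]_\g{k},\g{s}] \subset \g{s}$. I claim that, for $\dim\Sigma \geq 2$, the first condition alone already forces either $\xi \in \g{s}$ (\emph{invariant case}) or $\g{s} \subset \g{h}$ with $\omega|_\g{s} = 0$ (\emph{Legendrian case}). Indeed, if $\g{s}$ projects nontrivially onto $\g{z}$, pick $v = \xi + w \in \g{s}$ with $w \in \g{h}$ and set $\g{s}_h = \g{s} \cap \g{h}$. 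Then $[X,Y]_\g{m} = \omega(X,Y)\xi \in \g{s}$ for $X,Y \in \g{s}_h$ forces $\omega(X,Y)\,w \in \g{s}_h$, so either some $\omega(X,Y) \neq 0$ and $w \in \g{s}_h$ (whence $\xi = v-w \in \g{s}$), or $\omega|_{\g{s}_h} = 0$. In the latter subcase, combining $[v,X]_\g{m} = \varphi(X) + \omega(w,X)\xi \in \g{s}$ with the Sasakian compatibility $\omega(X,\varphi X) = \langle X,X\rangle$ pins $w$ to $\g{s}_h$ or collapses $\dim\g{s}_h \le 1$; a $\Sigma$ of dimension at most two is never maximal, so the dichotomy holds.

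\emph{Step 2 (Classification).} In the invariant case, $\g{s} = \R\xi \oplus \g{s}_h$ with $\g{s}_h \subset \g{h}$, and $[\xi,X]_\g{m} = \varphi(X) \in \g{s}_h$ makes $\g{s}_h$ a $\varphi$-invariant (complex) subspace of $\g{h}$. Under the identification $\g{h} \cong T_{\pi(o)}B$, Tojo's second condition translates into the Lie-triple condition for the symmetric Hermitian complex quadric $B$, so $\Sigma = \pi^{-1}(\Sigma')$ with $\Sigma' \subset B$ a complex totally geodesic submanifold; the known classification of such submanifolds in the complex quadric gives, as maximal cases, $\Sigma' = G_2(\R^{n+1})$ and, for $n$ even, $\Sigma' = \C P^{n/2}$, lifting to (i) and (iii). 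In the Legendrian case, the second Tojo condition, expressed via the explicit formula for $[X,Y]_\g{k}$, constrains $\g{s}$ among Lagrangians of $\g{h} \cong \C^n$ to those stable under the diagonal $\g{so}(n)$-action induced by $[\g{s},\g{s}]_\g{k}$. A case analysis of such stable Lagrangians shows that, up to isometry of $M$, $\g{s}$ is the split Lagrangian $\R^p \oplus i\R^q$ attached to an orthogonal decomposition $\R^{n+2} = \R^{p+1} \oplus \R^{q+1}$ with $p + q = n$; the corresponding integral submanifold is $S^p(1) \times S^q(1)$, giving (ii). Each of (i)--(iii) is the fixed-point set of an involutive isometry of $M$ (a block reflection across $\R^{n+1}$, reflection swapping the two direct summands, and complex conjugation on $\R^{n+2} \cong \C^{(n+2)/2}$, respectively), hence reflective and closed.

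\emph{Main obstacle.} The crux is the Step~1 dichotomy: Tojo's first condition alone does not obviously rule out ``tilted'' tangent subspaces with $0 \neq w \in \g{h}$, and the argument must simultaneously exploit the $\g{z}$- and $\g{h}$-component identities coming from $\omega$ and $\varphi$, together with the Sasakian compatibility $\omega(X,\varphi X) = \langle X,X\rangle$. A secondary difficulty is the Legendrian classification of Step~2, where the isotropy analysis of $[\g{s},\g{s}]_\g{k}$ must rule out ``twisted'' Lagrangians (arising from non-orthogonal or complex splittings) and single out precisely the split Lagrangians $\R^p \oplus i\R^q$.
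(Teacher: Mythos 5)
There is a genuine gap, and it sits at the foundation of your Step~1. You state ``Tojo's criterion requires $[\g{s},\g{s}]_{\g{m}} \subset \g{s}$ and $[[\g{s},\g{s}]_{\g{k}},\g{s}] \subset \g{s}$,'' but this is not Tojo's criterion. The actual criterion (Theorem~\ref{tojo}) is that $e^{\nabla X}(\g{s})$ be invariant under the curvature tensor $R$ for all $X\in\g{s}$; for a naturally reductive space this is \emph{not} equivalent to the bracket conditions you impose, and those conditions are not even necessary. A concrete counterexample lives inside the very space at hand: the tangent plane $\g{s}=\Span\{X_{12},X_{13}\}$ of a totally geodesic $\R\mathrm{P}^2\subset V_2(\R^3)\subset M$ is a totally geodesic subspace (every $2$-plane of the constant-curvature block $V_2(\R^3)$ is), yet $[X_{12},X_{13}]=-X_{23}\notin\g{s}$, so $[\g{s},\g{s}]_{\g{m}}\not\subset\g{s}$. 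Your conditions essentially characterize $\g{s}$ as the tangent space of a local orbit, but Proposition~\ref{prop:noho} of the paper shows that $M$ admits totally geodesic submanifolds that are not extrinsically homogeneous. Consequently the dichotomy ``$\xi\in\g{s}$ or $\g{s}$ horizontal and Legendrian'' is derived from a false premise; while the dichotomy happens to hold for the \emph{maximal} submanifolds in the final list, you have not shown that every totally geodesic subspace of dimension $\ge 2$ is contained in one satisfying it, which is exactly what a classification of maximal examples requires. The paper instead extracts the correct first-order consequences of Tojo's criterion (invariance of $\g{s}$ under the Jacobi operators $R_X$, plus Lemmas~\ref{linalg} and~\ref{linalg2} applied with $S=-\nabla X^*$) to prove that $\g{s}$ must contain a horizontal vector that is \emph{singular} for the isotropy representation of the Grassmannian, and then runs a case analysis over the two singular types.

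Two further points. First, even granting the dichotomy, your Step~2 is only a sketch: in the ``invariant'' case you appeal to the classification of complex totally geodesic submanifolds of the complex quadric, which the paper deliberately avoids (this would be a legitimate alternative route if the reduction were justified by the correct criterion), and in the ``Legendrian'' case the claim that only split Lagrangians $\R^p\oplus i\R^q$ survive is asserted rather than proved — note that the diagonal Lagrangians (tangent to the totally real round spheres inside the Hopf--Berger sphere, cf.\ the case $X=\tfrac1{\sqrt2}(X_{13}+X_{24})$ in the paper) also occur and must be identified and shown to be non-maximal. Second, your reflective descriptions at the end are consistent with subsection~\ref{refl}, but they do not substitute for the forward direction of the classification. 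To repair the argument you would need to replace the bracket conditions by $R$-invariance of $e^{t\nabla X}\g{s}$ and its differentiated consequences, as in Section~\ref{sec:sing}.
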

In Subsection~\ref{refl} the terminology and explicit descriptions are given. The item \textup{(iii)} in  the above theorem corresponds to a certain complex Hopf-Berger sphere. These are constructed by endowing ${S}^{n+1}$ with the Berger metric $g_\tau$, which is obtained by starting with the standard round metric (with radius 1) and rescaling the vertical subspace of the Hopf fibration ${S}^1 \rightarrow {S}^{n+1} \rightarrow \C{\mathrm P}^\frac{n}{2}$ by a factor $\tau > 0$. We denote by ${S}^{n+1}_{\C,\tau}(r)$ the sphere ${S}^{n+1}$ equipped with the Riemannian metric $r^2 g_\tau$. Using Theorem~\ref{tg} we can recover all totally geodesic submanifolds by proceeding inductively (for products of rank-one symmetric spaces see~\cite{R}, and for the complex Hopf-Berger sphere see~\cite{OR}).

Every complete totally geodesic submanifold of a homogeneous 
Riemannian manifold is intrinsically homogeneous 
(cf.~\cite[Ch.~VII, Cor.~8.10]{kn}), 
but in case the ambient manifold is not a symmetric space
there is no a priori reason for it to be extrinsically homogeneous. 
Recall that a Riemannian homogeneous space is called a \emph{g.o.~space} if each of its geodesics is an orbit of a one-parameter group of isometries. All naturally reductive homogeneous spaces are g.o.~spaces, so in particular $V_2(\R^{n+2})$ is a g.o.~space. 
However, we show that $V_2(\R^{n+2})$, $n\geq2$, contains a unique congruence class of a totally geodesic submanifold isometric to $\R {\mathrm P}^2$ which is not extrinsically homogeneous, see~Proposition~\ref{prop:noho}.
To our knowledge, this is the first example of a not extrinsically homogeneous totally geodesic submanifold in a normal homogeneous space.

\bigskip

In the second part of this paper, we deal with polar actions on $V_2(\R^{n+2})$. Recall that a polar action on a complete Riemannian manifold is a proper and isometric action admitting a section, i.e.\ a connected complete submanifold which meets all orbits and meets them always orthogonally.

The classification of polar actions on symmetric spaces is a research topic whose history goes back several decades. 
It is closely related to the study of totally geodesic submanifolds, since sections are always totally geodesic.  In particular, polar actions were classified in compact rank one symmetric spaces, see~\cite{PTJDG} and~\cite{GK}. Biliotti conjectured that any polar action on an irreducible compact symmetric space of rank higher than one is hyperpolar, i.e.\ has flat sections. The conjecture was confirmed in a series of papers~\cite{kpolar,kollrossexcep,KL} and, since hyperpolar actions were classified in~\cite{kollross}, this led to the classification of polar actions on irreducible compact symmetric spaces. However, it remains an open question whether Billioti's conjecture can be extended to a broader context.  We we will see that this is the case for $V_2(\R^{n+2})$.

Many examples of polar actions are given by cohomogeneity one actions. We obtain a classification of all cohomogeneity one actions and thus of all (extrinsically) homogeneous hypersurfaces of all real, complex and quaternionic Stiefel manifolds~$V_k(\F^n) = G/H $, where $G = \SO n$, $\U n$, or $\Sp n$, respectively, and $H$ is the subgroup of~$G$ that fixes the first~$k$ canonical basis vectors~$e_1, \dots, e_k$ of~$\F^n$.

\begin{table}[h]
	\centering
	\begin{tabular}{|c|c|}
		\hline
		\str $L \times K$ & $M$ \\ \hline \hline
		\str $\SO{n-1} \times \SO{k}$ & $V_k(\R^n)$ \\ \hline
		\str $\Spin9 \times \SO2$ & $V_2(\R^{16})$ \\ \hline
		\str $\Sp{m}\Sp1 \times \SO2$ & $V_2(\R^{4m})$ \\ \hline
		\str $\U{m} \times \SO2$ & $V_2(\R^{2m})$ \\ \hline
		\str $\U{m} \times \SO3$ & $V_3(\R^{2m})$ \\ \hline
		\str $\G \times \SO{3}$ & $V_3(\R^7)$ \\ \hline\hline
		\str $\Sp{m} \times \U2$ & $V_2(\C^{2m})$ \\ \hline
		\str $\Sp{m} \times \U3$ & $V_3(\C^{2m})$ \\ \hline
		\str $\SUxU{n-1}{1} \times \U{k}$ & $V_k(\C^n)$ \\ \hline\hline
		\str $\left[ \Sp{n-1}\times\Sp{1} \right] \times \Sp{k}$ & $V_k(\H^n)$ \\ \hline
	\end{tabular}
	\caption{ \str Isometric actions on $M$ of cohomogeneity one.}\label{t:hhyp}
\end{table}

\begin{thm}\label{thm:hhls}
Let $M = G/H = V_k(\F^n)$, $k \in \{ 2, \dots, n-1\}$, be a Stiefel manifold equipped with a normal homogeneous metric.
Let $S$ be an (extrinsically) homogeneous hypersurface of $M$. 
Then the following holds.
	\begin{enumerate}
		\item There is a homogeneous hypersurface $U$ of the Grassmannian $G_k(\F^n)$ such that $S = \pi^{-1} (U)$.
		\item The hypersurface $S$ is congruent to a regular orbit of one of the actions of a Lie group~$L \times K$ on the Stiefel manifold~$M = G/H$, where $L \times K$ and~$M$ are as in~Table~\ref{t:hhyp}, and where the action is given by
		\begin{equation}\label{eq:stac}
		(\ell,k) \cdot gH = \ell g k H\quad\mbox{for$\quad\ell \in L,\; k \in K,\; g \in G$.}
		\end{equation}	
	\end{enumerate}
\end{thm}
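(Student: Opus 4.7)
The plan is to reduce the classification to the known classification of cohomogeneity-one actions on the Grassmannian $B = G_k(\F^n)$, exploiting the Riemannian submersion $\pi\colon M\to B$ of~(\ref{fibration}). I would begin by identifying the identity component of the isometry group as $\mathrm{Iso}^0(M) = (G\times K')/Z$, where $G \in \{\SO n,\U n,\Sp n\}$ acts on $M = G/H$ by left translations, $K' \in \{\SO k,\U k,\Sp k\}$ is the \emph{frame group} acting on the right via $(g,k)\cdot xH = gxk^{-1}H$, and $Z$ is a finite central subgroup; crucially $K'$ preserves each fiber of $\pi$ and acts on it transitively. The low-dimensional exception $V_2(\R^4) \cong S^3\times S^2$ must be treated separately, but since the normal metric~(\ref{eq:metric}) is not a product metric (see Remark~\ref{rmk:irreducible}), no unexpected isometries arise.

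Next, let $L'\subseteq \mathrm{Iso}^0(M)$ be a connected closed subgroup acting with cohomogeneity one. Setting $L := p_1(L')\subseteq G$ and $K := p_2(L')\subseteq K'$, one has $L'\subseteq L\times K$, and the enlarged product $L\times K$ still acts with cohomogeneity at most one. Because $K'$ acts trivially on $B$ through $\pi$, the $L'$-action descends to an action of $L$ on $B$ of cohomogeneity at most one. The key step is then to prove the saturation property: any regular orbit $S$ of $L\times K$ coincides with the corresponding orbit of the larger group $L\times K'$, and is therefore $\pi$-saturated, whence $S = \pi^{-1}(U)$ with $U := \pi(S)$ a homogeneous hypersurface of $B$ under $L$. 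My plan is to split into two cases according to whether $L$ has cohomogeneity one or zero on $B$. In the first case the equality of dimensions $\dim S = \dim B - 1 + \dim F$ forces $K$ to be fiber-transitive, so saturation follows. In the second case, when $L$ acts transitively on $B$, one reduces to Onishchik's classification of transitive actions on Grassmannians; a case-by-case check (using that in those rare cases $L$ is in fact already transitive on $M$ itself, as happens e.g.\ for $\G$ acting on $V_2(\R^7)$) rules out any spurious cohomogeneity-one configurations and confirms assertion~(i).

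Finally, I would invoke the classification of cohomogeneity-one actions on compact Grassmannians $G_k(\F^n)$ available from the classification of polar and hyperpolar actions on irreducible compact symmetric spaces~\cite{kollross,kollrossexcep,KL}. For each admissible $L$ on $B$, pairing $L$ with the full frame group $K = K'$ saturates fibers and yields, after inspection, precisely the entries of Table~\ref{t:hhyp}; conversely each entry acts with cohomogeneity one on $M$ by a direct slice computation, giving assertion~(ii). The principal obstacle I foresee is the saturation step in the degenerate case of a transitive $L$ on $B$: ruling out configurations where $L'\subsetneq L\times K'$ is a ``twisted'' diagonal subgroup requires ad hoc arguments involving the sporadic low-dimensional coincidences on Grassmannians (in particular the transitive actions $\G$ on $G_2(\R^7)$, $\Spin 7$ on $G_k(\R^8)$, and similar exceptional examples). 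Once these are handled, matching against Table~\ref{t:hhyp} is routine bookkeeping using the known Grassmannian classification.
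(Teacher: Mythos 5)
Your proposal follows essentially the same route as the paper: identify $\mathrm{Isom}^0(M)$ as the (almost) product $G\cdot K_1$ via irreducibility of the normal metric, project a cohomogeneity-one subgroup to the $G$-factor to get a dichotomy (cohomogeneity one versus transitive on the Grassmannian $B=G_k(\F^n)$), then import the classification of cohomogeneity-one actions on Grassmannians from~\cite{kollross} and of transitive actions from~\cite{oniscik}, disposing of the transitive case by observing that $\G$ and $\Spin7$ are already transitive on the corresponding Stiefel manifolds. The saturation/fiber-transitivity step you describe for part~(i) is exactly how the paper passes from hypersurfaces of $B$ to hypersurfaces of $M$, so the argument is correct and matches the paper's proof in structure and in all essential lemmas.
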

\bigskip

Finally, we show that the sections of polar actions on~$V_2(\R^n)$ are horizontal with respect to the fibration $V_2(\R^n) \to G_2(\R^n)$, which implies that they all come from polar actions on the corresponding Grassmannian.
Our method here works for the more general class of $G$-invariant homogeneous metrics.

\medskip
\begin{thm}\label{pa}
Let $M=V_2(\R^{n+2})$, $n \ge 2$, be equipped with an $\SO{n+2}$-invariant Riemannian metric.
Suppose a compact connected Lie group acts polarly on~$M$. Then there exists a polar action on~$B=G_2(\R^{n+2})$ such that the orbits of the action on~$M$ coincide with the preimages of the orbits of the action on~$B$ under the canonical projection $M \to B$. In particular, every polar action on~$M$ is orbit equivalent to either one of the following:
\begin{enumerate}
\item the action of a subgroup~$L \times K \subset \SO{n+2} \times \SO2$ acting with cohomogeneity one, as given in the first four rows of~Table~\ref{t:hhyp}, or;
\item the action of a subgroup
\[
L \times K = [\SO q \times \SO{n+2-q}] \times \SO2, \quad q, n+2-q \ge 2,
\]
of $\SO{n+2} \times \SO2$, which acts hyperpolarly and with cohomogeneity two.
\end{enumerate}
In both cases, the action of $L \times K$ on~$M$ is defined by~(\ref{eq:stac}).
\end{thm}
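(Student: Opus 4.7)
The plan is to reduce the classification of polar actions on $M = V_2(\R^{n+2})$ to that on the Grassmannian $B = G_2(\R^{n+2})$. Let $H$ be a compact connected Lie group acting polarly on $M$ with section $\Sigma$, and let $\mathcal H$, $\mathcal V$ denote the horizontal and vertical distributions of the Riemannian submersion $\pi \colon M \to B$; the fibers of $\pi$ are $S^1$'s that remain totally geodesic for any $\SO{n+2}$-invariant metric. The core technical claim is that $\Sigma$ is horizontal, i.e.\ $T_p\Sigma \subseteq \mathcal H_p$ for every regular $p \in \Sigma$. Granting this, the orthogonal decomposition $T_pM = T_p(H\cdot p)\oplus T_p\Sigma$ forces $\mathcal V_p \subseteq T_p(H\cdot p)$ at every regular $p$, and therefore each regular orbit is a union of entire fibers. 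The orbit partition of $M$ thus descends to a partition of $B$ whose section is $\pi(\Sigma)$, giving a polar foliation on $B$ that is recognized as the orbit foliation of a polar action.

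To prove horizontality of $\Sigma$, I would decompose the vertical unit vector at a regular point as $V_p = V^\top + V^\perp$ with respect to $T_p(H\cdot p)\oplus T_p\Sigma$ and aim to show $V^\perp = 0$. The inputs are: $\Sigma$ is totally geodesic, so $T\Sigma$ and $T\Sigma^\perp$ are preserved under parallel transport along $\Sigma$; the distributions $\mathcal H$ and $\mathcal V$ are left-$\SO{n+2}$-invariant and correspond to distinct irreducible components of the isotropy $\SO n$-representation on $T_pM$ (two copies of the standard representation plus a trivial line), which forces them to be orthogonal for every $\SO{n+2}$-invariant metric; and the orbit $H\cdot p$ and the section $\Sigma$ meet orthogonally at $p$. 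Analyzing $\nabla V$ along geodesics in $\Sigma$ via the O'Neill tensors of $\pi$ should then show that a nonzero $V^\perp$ would force $\Sigma$ to contain an entire vertical fiber, a case that can be reduced to the horizontal one by passing to an orbit-equivalent action within the ambient $\SO{n+2}\times\SO 2$ group whose section is horizontal.

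Once horizontality of $\Sigma$ is established, the classification reduces to the known one on $B$. Since $B = G_2(\R^{n+2})$ is an irreducible compact symmetric space of rank $2$, by the confirmation of Biliotti's conjecture in~\cite{kpolar, kollrossexcep, KL} every polar action on $B$ is hyperpolar, and by Kollross's classification of hyperpolar actions~\cite{kollross} these are precisely the cohomogeneity-one actions (the $\SO{n+1}$-action together with those involving the special isomorphisms $\Spin 9$, $\U m$, $\Sp m\Sp 1$) and the cohomogeneity-two decomposition action $\SO q\times\SO{n+2-q}$. Lifting these through $\pi$ and combining with the right $\SO 2$-factor acting on the fiber by~(\ref{eq:stac}) yields the actions listed in the statement, giving the desired orbit equivalence. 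The main obstacle is the proof of horizontality of the section; the subsequent descent, classification, and lifting steps are routine given the cited results.
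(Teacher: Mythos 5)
Your overall strategy---prove that sections are horizontal, then descend to the Grassmannian and invoke the known classification there---is the same as the paper's, and your final classification step is essentially correct. But the one step that carries all the weight, horizontality of the section, is only gestured at, and the mechanism you propose for it does not work. You want a purely local Riemannian argument: decompose $V_p=V^\top+V^\perp$ and use the O'Neill tensors along geodesics of $\Sigma$ to show that $V^\perp\neq0$ forces $\Sigma$ to contain a whole fiber. No such argument can succeed as stated, because it would apply verbatim to $V_2(\R^3)\cong\R\mathrm{P}^3$, where the fibration and the O'Neill tensors look exactly the same and yet there is a polar $\SO2$-action with a non-horizontal, non-flat section isometric to $\R\mathrm{P}^2$ (this is precisely why the theorem carries the hypothesis $n\ge2$). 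Your sketch never uses $n\ge2$, so it cannot be completed in the form you describe; moreover, the implication ``$V^\perp\neq0$ implies the fiber lies in $\Sigma$'' is unjustified even heuristically, since $V_p$ need not be tangent to $\Sigma$ for $V^\perp$ to be nonzero.

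The paper's route to horizontality is group-theoretic rather than tensorial, and this is where the dimension hypothesis enters. First, by Tsukada's theorem~\cite{tsukada} the irreducible space $M$ admits no totally geodesic hypersurface, so a polar action cannot have one-dimensional principal orbits; hence the acting group $L\subseteq\SO{n+2}\times\SO2$ has $\dim L\ge2$, and the kernel $L'$ of the projection of $L$ onto the $\SO2$-factor is a positive-dimensional closed subgroup of $\SO{n+2}$. Second (Lemma~\ref{lm:toru}), conjugating a maximal torus of $L'$ into the standard one produces a one-parameter subgroup whose orbit through $o$ is the vertical fiber, so the \emph{orbit} (not the section) contains the vertical direction at $o$; since $\dim\mathcal V=1$ and sections meet orbits orthogonally, the section is horizontal at $o$. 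Finally, Lemma~\ref{t-r} shows a totally geodesic submanifold horizontal at one point is horizontal everywhere, and the descent to a polar action on $G_2(\R^{n+2})$ is then handled by citing~\cite{mucha} rather than by the hand-made foliation argument you outline. To repair your proposal you would need to replace your O'Neill-tensor step with an argument of this kind (or some other argument that genuinely uses $n\ge2$ and the structure of the isometry group), and you should also justify why the induced partition of $B$ is the orbit partition of a group action rather than merely a singular Riemannian foliation.
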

The restriction~$n \ge 2$ is necessary because $V_2(\R^3)$ is isometric to $\R {\mathrm P}^3$ and there is an action of~$\SO2$ on~$\R {\mathrm P}^3$ that has a non-flat section isometric to $\R {\mathrm P}^2$.

It is a natural question to ask whether Theorems~\ref{tg} and~\ref{pa} can be generalized to Stiefel manifolds of $k$-frames with $k\ge 3$ over $\R$, $\C$, or $\H$.
We expect these to be harder problems because for the classification of totally geodesic submanifolds 
in case $k=2$ we have strongly used the fact that the fiber of the fibration $V_2(\R^{n+2}) \to G_2(\R^{n+2})$ is one-dimensional.
In the case of polar actions, this is also used in Lemma~\ref{lm:toru} to show that sections must be horizontal.

\subsection*{Acknowledgements} 
We are grateful to Juan Manuel Lorenzo-Naveiro for useful conversations on a preliminary version of this article.

\section{Preliminaries}

\subsection{Reductive decompositions}\label{subs:redec}

We refer the reader to~\cite{gw} for the relevant facts about Riemannian
submersions. We write the Riemannian submersion~(\ref{fibration}) as
\[ K/H \longrightarrow G/H \stackon{$\longrightarrow$}{$\pi$} G/K. \]
Associated with the symmetric space $B=G/K = \SO{n+2}/\SOxO{2}{n}$ is the decomposition $\Lg = \Lk +\Lp$, where $\Lg=\mathfrak{so}_{n+2}$, $\Lk=\mathfrak{so}_2\oplus\mathfrak{so}_n$ and $\Lp=\R^2\otimes\R^n$ as a $K$-module; 
Consider the reductive decomposition~$\Lg=\Lh+\Lm$ of~$M$, where $\Lh=\mathfrak{so_n}$ and $\Lm=\Lz + \Lp$, with $\dim\Lz=1$  and where $o = eH$~denotes the base point. 
The space~$\Lm$ is identified with the tangent space~~$T_oM$
and $\Lp$~is identified with~$T_{\pi(o)}B$. 
We identify $\Lg$~with the skew-symmetric real matrices of order~$n+2$, and the subalgebra~$\Lk$ with the block diagonal matrices as follows,
\[ \Lk= \left\{\left( \begin{array}{cc} P & 0 \\ 0 & Q \end{array}\right)\colon
\ P\in\mathfrak{so}_2, Q\in\mathfrak{so}_n\right\}, \]
so that
\[ \Lp = \left\{ \left( \begin{array}{cc} 0 & Z \\ -Z^t & 0 \end{array}\right)
\colon\ Z\in M_{2\times n}(\R) \right\}. \]

Recall that $B$ is a globally Riemannian symmetric space covered by a Hermitian symmetric space, and there is $V\in\Lz$ such that $J=-\mathrm{ad}_V|_{\mathfrak p}$ defines a complex structure on~$\Lp$. For $1 \le i , j \le n+2$ we define~$E_{ij}$ to be the real $(n+2) \times (n+2)$-matrix which has $1$ in the $(i,j)$-entry and $0$~elsewhere. For $i < j$ we define $X_{ij}=E_{ij}-E_{ji}$. We can take $V=X_{12}$.

Let us consider the symmetric bilinear map $U\colon \g{m} \times \g{m}\rightarrow \g{m}$ defined by
\[2\langle U(X,Y), Z\rangle = \langle [Z, X]_{\g{m}}, Y \rangle + \langle X, [Z, Y ]_{\g{m}}\rangle\] where $X, Y, Z \in \g{m}$, $\langle {\cdot},{\cdot} \rangle$ is an $\mathrm{Ad}_G(H)$-invariant scalar product on~$\g{m}$ and $(\cdot)_{\g{m}}$ denotes the natural projection onto~$\g{m}$. The reductive decomposition $\g{g}=\g{h}+\g{m}$ is said \textit{naturally reductive} if $U$ is identically zero.

Finally, it is important to notice that the identity component of the isometry group of~$M$ contains $\SO{n+2} \cdot \SO 2$, where the $\SO 2$-factor acts freely from the right on~$M=G/H$; in fact, it normalizes $H$.
It will be shown in Lemma~\ref{lemma:stis} below that the identity component of~$\mathrm{Isom}(M)$ is precisely $\SO{n+2}\cdot \SO 2$.

\subsection{Totally geodesic submanifolds}

In our case, $M$~is a homogeneous Riemannian manifold, and hence analytic and complete. Hence every locally defined totally geodesic submanifold of~$M$ can be uniquely extended to a complete totally geodesic submanifold (possibly with self-intersections) by exponentiating the tangent space at any of its points~\cite{h}. By homogeneity, it is also sufficient to classify totally geodesic submanifolds of~$M$ passing through~$o$.  It is convenient to call a subspace of~$\Lm$ a \emph{totally geodesic subspace} if it exponentiates to a
totally geodesic submanifold of~$M$.

The class of naturally reductive homogeneous spaces~$M=G/H$ includes the normal homogeneous spaces. For naturally reductive homogeneous spaces with reductive decomposition $\Lg=\Lh+\Lm$, we recall the following result from~\cite{tojo}, where $R$ denotes the curvature tensor of~$M$:
\begin{thm}[Tojo's criterion]\label{tojo}
A subspace $\Ls$ of~$\Lm$ is totally geodesic
if and only if
$e^{\nabla X}(\Ls)$ is $R$-invariant for all $X\in\Ls$.
\end{thm}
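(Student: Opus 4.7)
The plan is to deduce Tojo's criterion from the classical Cartan characterization of totally geodesic subspaces in a complete Riemannian manifold. That characterization states that a subspace $\Ls\subseteq \Lm=T_oM$ exponentiates to a totally geodesic submanifold if and only if, for every $X\in\Ls$ and every $t\in\R$, the parallel translate $P_t^{\gamma_X}(\Ls)$ of $\Ls$ along the geodesic $\gamma_X(t)=\exp(tX)\cdot o$ is invariant under the curvature tensor $R$ at the point $\gamma_X(t)$. This equivalence is essentially by definition: a complete totally geodesic submanifold is uniquely determined by its tangent space at one point via analytic continuation along geodesics, and the vanishing of the second fundamental form translates through Gauss and Codazzi into exactly this curvature invariance of parallel-translated tangent spaces.

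The second step is to translate this condition, which a priori lives at the moving point $\gamma_X(t)$, into a statement purely at the origin $o$, using the naturally reductive structure. For a naturally reductive homogeneous space $M=G/H$, the curve $\gamma_X$ is a geodesic of both the Levi-Civita connection $\nabla$ and the Ambrose-Singer-Kostant canonical connection, and parallel translation along it admits the explicit Kostant-type description
\[
P_t^{\gamma_X} \;=\; dL_{\exp(tX)}\big|_o\circ e^{\,t\,\nabla X},
\]
where $\nabla X\colon\Lm\to\Lm$ is the skew-symmetric endomorphism $Y\mapsto(\nabla_Y X^{*})|_o$ associated with the Killing field $X^{*}$ generated by $X\in\Lm$. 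Skew-symmetry is Kostant's formula, and naturally reductive spaces are characterized by the fact that this operator admits a clean description in terms of the bracket on $\Lm$.

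With this formula in hand the rest of the argument is essentially formal. Because $L_{\exp(tX)}$ is an isometry, it intertwines $R$ at $o$ and at $\gamma_X(t)$, so $R$-invariance of $P_t^{\gamma_X}(\Ls)$ at $\gamma_X(t)$ is equivalent to $R$-invariance of $e^{t\nabla X}(\Ls)$ at $o$. The assignment $X\mapsto \nabla X$ is linear, hence $e^{t\nabla X}=e^{\nabla(tX)}$, and since $\Ls$ is a subspace the element $tX$ ranges over all of $\Ls$ as $(X,t)$ varies over $\Ls\times\R$. Thus the two-parameter family of conditions collapses to the single family $e^{\nabla X}(\Ls)$, $X\in\Ls$, being $R$-invariant at $o$, which is precisely Tojo's criterion.

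The main obstacle is the parallel-translation formula of the middle step. Establishing it cleanly requires unpacking the relation between the Levi-Civita connection and the canonical connection of the reductive decomposition, verifying the Kostant skew-symmetry of $\nabla X$, and checking that the differential of left translation intertwines parallel translation along $\gamma_X$ with the exponential $e^{t\nabla X}$. All of this is standard in the theory of naturally reductive homogeneous spaces but is where the natural-reductivity hypothesis does its real work; once this formula is in hand, the reduction to a curvature-invariance condition at a single point is routine.
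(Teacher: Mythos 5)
This theorem is quoted in the paper from Tojo's article without proof, so there is no in-paper argument to compare against; your derivation is the standard one and, as far as I can tell, essentially the one given in the cited source. The two ingredients you isolate --- Cartan's curvature-invariance criterion along geodesics emanating from~$o$ in directions of~$\Ls$, and the formula $\tau_t = dL_{\exp(tX)}\circ e^{-t\nabla X}$ for parallel transport along the homogeneous geodesic $\gamma_X(t)=\exp(tX)\cdot o$ --- are both correct, and the second is exactly the identity the paper itself later imports as equation~(\ref{tau}) from Lemma~3.1 of Tojo's paper (your exponent has the opposite sign, but since $X\in\Ls$ if and only if $-X\in\Ls$, this is immaterial for the criterion). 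You correctly flag the parallel-transport formula as the one nontrivial step, which is where natural reductivity genuinely enters; granting it, the reduction to $R$-invariance of $e^{\nabla X}(\Ls)$ at the single point~$o$ is formal as you describe, so the outline is sound.
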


\subsection{Reflective submanifolds}\label{refl}

A \emph{reflective submanifold} of a Riemannian manifold is a connected component of the fixed point set of an involutive isometry of the manifold; it is automatically a totally geodesic submanifold. Reflective submanifolds of symmetric spaces were classified by Leung~\cite{leung}. In the case of~$B$, the associated isometries lift to involutive isometries of~$M$ and give rise to three families
of reflective submanifolds of~$M$, as follows.

\begin{enumerate}
\item Let $g=\left(\begin{smallmatrix}I_p&0\\0&-I_q\end{smallmatrix}\right)$,
where $p+q=n+2$.
Then the fixed point set is $M^g=V_2(\R^{p})$.
\item Let $g=\left(\begin{smallmatrix}-1&0&0&0\\0&1&0&0\\
0&0&I_p&0\\0&0&0&-I_q\end{smallmatrix}\right)$, where $p+q=n$.
Then $M^g=S^p(1)\times S^q(1)$ (a Riemannian product of round spheres of radius~$1$).
As a special case, $q=0$ gives $M^g=V_1(\R^{n+1})=S^n(1)$.
\item If $n$ is even, take $\tau$ to be conjugation by the matrix
$\left(\begin{smallmatrix}J_1&0\\0&J_{\frac n2}\end{smallmatrix}\right)$,
where $J_k=\left(\begin{smallmatrix}0&-I_k\\I_k&0\end{smallmatrix}\right)$.
Then $\tau$ is an involutive automorphism of~$\Lg=\mathfrak{so}_{n+2}$  which
preserves the decomposition $\Lh+\Lm$ and hence induces an involutive
isometry of~$M$ whose fixed point set is the reflective manifold
$S^{n+1}_{\mathbb C,\frac12}=\SU{\frac n2+1}/\SU{\frac n2}\subset \SO{n+2}/\SO{n}=M$ (a complex Hopf-Berger
sphere).
\end{enumerate}

In Section~\ref{classif} we show that every maximal totally geodesic submanifold of~$M$ falls into one of these classes, thereby proving Theorem~\ref{tg}.

\subsection{Isotropy representation of the Grassmannian}\label{isotr}

The Grassmannian~$B=G_2(\R^{n+2})$ is a rank-two  symmetric space of compact type. For $n>2$, its isotropy representation is the (irreducible) representation of~$K=\SOxO{n}{2}$ on~$\Lp=\R^n\otimes\R^2$, which is a polar
representation. In fact, a maximal abelian subspace~$\La$ of~$\Lp$ is two-dimensional and meets all the orbits orthogonally, so it is a section.
The restricted root system is of type~$\mathrm{B}_2$, the restricted Weyl group~$\mathcal W$ is the dihedral group~$\mathrm{D}_4$ (with $8$~elements) and hence the orbit space $\Lp/K=\La/\mathcal W$ is the simplicial cone of angle~$\pi/4$ in the plane. Besides the origin, there are two singular orbit types, namely $\SOxO{n-1}1$ and
$\SOxO{n-2}2$.

It is convenient to choose $\La=\Span \{ X_{13},X_{24} \} $ and write representatives for the $K$-orbits of unit vectors in~$\Lp$ as $\cos (t) X_{13}+\sin (t) X_{24}$, where $t\in[0,\pi/4]$, so that $t=0$ and $t=\pi/4$ parametrize the
singular orbits and $t\in(0,\pi/4)$ parametrize the principal orbits.

\subsection{The curvature}

Since $\pi:M\to B$ is a Riemannian submersion with totally geodesic $1$-dimensional fibers, we can use O'Neill's formulas to derive the expression for the curvature tensor of~$M$ in terms of the curvature tensor of the symmetric space~$B$ and O'Neill's tensor~$A$. We write $\mathcal H$ and~$\mathcal V$ for the sections of the horizontal and vertical subbundles of the tangent bundle. Then $T_oM=\Lm$, $\mathcal H_o=\Lp$ and $\mathcal V_o=\Lz$, and the $A$-tensor defined by
\[
A \colon \mathcal H\times\mathcal H \to \mathcal V,\ A_XY = (\nabla_XY)^v=\frac12[X,Y]^v \]
is given by $A_XY=\frac12\langle X,JY\rangle V$. Therefore the dual tensor $A^* \colon \mathcal H\times\mathcal V\to\mathcal H$ is $A^*_XV=-\frac12JX$. A simple calculation also shows that $(\nabla_E^v A)_XY=0$
for $X$, $Y\in\mathcal H$ and $E\in TM$, where $\nabla^v$ denotes the
vertical component of the Levi-Civita connection of $M$. 

The curvature tensor of the symmetric space $B$ is given by
\[ R^B(X,Y)Z= - [[X,Y],Z] \]
for $X$, $Y$, $Z\in\Lp$~\cite[Theorem~8.4.1]{w}.
Regarding the curvature tensor $R$ of $M$,
for now it suffices to consider the Jacobi operators
\[ R_X=R(\cdot,X)X \colon \Lm\to\Lm\quad\mbox{and}\quad R_V=R(\cdot,V)V \colon \Lp\to\Lp \]
for $X\in\Lp$. Now the first and third curvature identities
in~\cite[p.~44]{gw} yield
\begin{equation}\label{r-h}
 R_X(Y)=-\mathrm{ad}_X^2Y-\frac34\langle Y,JX\rangle JX\quad
\mbox{($Y\in\Lp$)},
\end{equation}
\[ R_V|_{\mathfrak p}=\frac14 \id_{\mathfrak p},  \]
and therefore
\begin{equation}
\label{r-hv} R_X(V) = \frac14 V.
\end{equation}

\subsection{Restricted roots}\label{roots}

Fix a maximal abelian subspace $\La$ of~$\Lp$ and suppose $X\in\La$. Recall the (orthogonal) restricted root decomposition
\[
\Lp=\La+\Lp_{\theta_1}+ \Lp_{\theta_2}+ \Lp_{\theta_1+\theta_2}+ \Lp_{\theta_1-\theta_2},
\]
where the dimensions are respectively $2$, $n-2$, $n-2$, $1$ and~$1$; where $\theta_1$, $\theta_2$ is the dual basis to $X_{13}$, $X_{24}$; we have chosen the scaling of the metric on~$M$ to make it an orthonormal basis (in particular $V=X_{12}$ is a unit vector). It is very easy to see that $\La$ is totally real in~$\Lp$, see~\cite[Lemma~2.7]{pt} and, indeed, $J\La=\Lp_{\theta_1+\theta_2}+\Lp_{\theta_1-\theta_2}$, where $\Lp_{\theta_1\pm\theta_2}=\Span \{ X_{23}\mp X_{14} \} $, see~\cite[p.~109]{l}. In addition, the second term in~(\ref{r-h}) vanishes if $Y\perp J\La$. It follows that $R_X(\La)=0$ and $R_X$ on~$\Lp_{\theta_i}$ is given by~$-\mathrm{ad}_X^2=\theta_i(X)^2\id$ for~$i\in\{1,2\}$. 

\subsection{Isometric actions}

We refer the reader to~\cite{bco,palais2006critical} for results about isometric actions and
polar actions. 
An isometric action of a compact Lie group~$H$ on a complete connected Riemannian manifold~$M$ is called \emph{polar} if there exists a \emph{section}, i.e.~a closed submanifold~$\Sigma$ such that $H \cdot \Sigma = M$ and $T_p(H \cdot p) \perp T_p\Sigma$ for each point $p \in \Sigma$. Sections are always totally geodesic submanifolds~\cite[Theorem 5.6.7]{palais2006critical} and any two sections of a polar action are conjugate by an element of the group~$H$. If sections are flat in the induced Riemannian metric, then the action is called \emph{hyperpolar}. Note that a compact Lie group~$H$ acts polarly on a Riemannian manifold if and only if the action restricted to the connected component of~$H$ is polar.
Recall that the \emph{cohomogeneity} of a Lie group action is defined as the minimal codimension of its orbits.

\section{The classification of totally geodesic
submanifolds}\label{classif}

\subsection{Every totally geodesic subspace contains a singular point}\label{sec:sing}

Let $\Sigma$ be a totally geodesic submanifold of~$M$ with~$\dim\Sigma\geq2$. For dimensional reasons, the tangent spaces to $\Sigma$ have a non-trivial intersection with the horizontal distribution of~$\pi \colon M\to B$. In this section we prove that there exists $p\in\Sigma$ such that $T_p\Sigma\cap\mathcal H_p$ contains a non-zero vector which projects to a singular point for the isotropy representation of~$M$ at~$\pi(p)$. The main tool is Tojo's criterion, see Theorem~\ref{tojo}, which we use only up to first order. Even more basic, any Jacobi operator~$R_X$ for~$X\in\Ls$ leaves $\Ls$ invariant; since this is a symmetric operator, it is diagonalizable over~$\R$ and hence $\Ls$ must be spanned by certain eigenvectors of~$R_X$. Note that $R_V$ is zero on~$\Lz$ and scalar on~$\Lp$, so it gives no information on~$\Ls$ and
we will rather consider $R_X \colon \Lm\to\Lm$ for $X\in\Ls\cap\Lp$.

Using the isometry group $\SO{n+2} \times \SO 2$ of~$M$, we can assume that $o\in\Sigma$ and
\[
X= \cos (t) X_{13}+\sin (t) X_{24} \in\Ls\cap\La,
\]
where $\Ls=T_o\Sigma$ is a totally geodesic subspace of~$\Lm$ 
(cf.~Subsection~\ref{isotr}). The main result of this section is that we may further assume that $t=0$ or $t=\pi/4$. The proof is by contradiction: suppose on the contrary that $\Ls\cap\Lp$ does not contain singular vectors.
\begin{table}
\begin{tabular}{ccc}
eigenvalue & eigenspace & multiplicity \\
\hline
 $0$ & $\La$ & $2$ \\
$1/4$ & $\Lz$ & $1$ \\
$\lambda_\pm$ & $\R X_\pm$ & $1$ \\
$\cos^2t$ & $\Lp_{\theta_1}$ & $n-2$ \\
$\sin^2 t$ & $\Lp_{\theta_2}$ & $n-2$ \\
\end{tabular}
\caption{Eigenspace decomposition of~$R_X$ for $t\in(0,\pi/4]$.}\label{1}
\end{table}
It is easy to see that $JX$ has non-zero components in $\Lp_{\theta_1\pm\theta_2}$ if and only if $\theta_1(X)\mp\theta_2(X)\neq0$; it then follows from~(\ref{r-h}) that $JX$ is not an eigenvector of~$R_X$ for $t\in[0,\pi/4)$. In particular, $R_X$ has different eigenvalues on~$J\La$, say $\lambda_{\pm}$. It is easy to compute that the eigenvalues of~$R_X$ on~$J\La$ for $t\in[0,\pi/4]$ are \[ \lambda_\pm = \frac{5\pm\sqrt{9+16\sin^2 2t}}8.  \] Note that $\lambda_+$ increases from $1$ to~$\frac54$, and that $\lambda_-$ decreases from $\frac14$ to~$0$ for $t\in[0,\pi/4]$.

Table~\ref{1} gives the complete eigenspace decomposition of~$R_X$ for $t\in(0,\pi/4]$; there 
\[ \Lp_{\theta_1}=\Span\{X_{15},\ldots,X_{1,n+2}\}, \qquad \Lp_{\theta_2}=\Span\{X_{25},\ldots,X_{2,n+2}\}. \] 
We do not need the precise expressions for~$X_\pm$, except that in case $t=\pi/4$ we have $X_+=\frac1{\sqrt2}(X_{23}-X_{14})=JX$ and
$X_-=\frac1{\sqrt2}(X_{23}+X_{14})$.

\begin{lem}\label{linalg}
Let $W$ be a finite dimensional real inner product space, let $S$, $T \colon W\to W$ be endomorphisms and assume that $T$ is symmetric. Let $U$ be a $T$-invariant subspace and consider the $1$-parameter family of automorphisms  $\{\varphi_t=e^{tS}\}$ of~$W$. Suppose $U_t=\varphi_tU$ is $T$-invariant for all~$t$. Then $U$ is $[S,\pi_\lambda]$-invariant for all eigenvalues $\lambda$ of~$T$, where $\pi_\lambda$ denotes the orthogonal projection of~$W$ onto the $\lambda$-eigenspace of~$T$ in~$W$, and $[\cdot,\cdot]$ the commutator of linear maps.
\end{lem}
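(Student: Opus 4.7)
The plan is to translate the hypothesis that $U_t = \varphi_t U$ is $T$-invariant into a conjugation statement, differentiate at $t = 0$ to recover a commutator acting on $U$, and then promote this commutator to the spectral projections of $T$ by a polynomial (Lagrange interpolation) argument.

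First, I observe that $T(U_t) \subseteq U_t$ is equivalent to $\psi(t)\, U \subseteq U$, where $\psi(t) := \varphi_t^{-1} T \varphi_t$. Hence for each $u \in U$, the smooth curve $t \mapsto \psi(t) u$ lies in $U$ for every $t$. Since $U$ is a closed subspace of the finite-dimensional space $V$, the derivative of this curve at $t = 0$ again lies in $U$. A direct computation gives $\psi'(0) = TS - ST = [T, S]$, so $[T, S]\, u \in U$, and therefore $[S, T]$ preserves $U$.

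Second, I claim by induction on $k$ that $[S, T^k]$ preserves $U$ for every $k \geq 0$. The base cases $k = 0, 1$ are immediate from the previous paragraph together with the hypothesis that $T$ preserves $U$, while the inductive step follows from the Leibniz identity
\[ [S, T^k] \;=\; [S, T]\, T^{k-1} + T\, [S, T^{k-1}], \]
combined with the elementary facts that a composition of $U$-preserving linear maps is $U$-preserving and that the $U$-preserving maps form a subspace of $\End(V)$. By linearity, $[S, p(T)]$ preserves $U$ for every polynomial $p \in \R[x]$.

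Third, since $T$ is symmetric its minimal polynomial factors as a product of distinct linear factors indexed by its eigenvalues. Lagrange interpolation then realizes each orthogonal projection $\pi_\lambda$ as a polynomial in $T$, namely
\[ \pi_\lambda \;=\; \prod_{\mu \neq \lambda} \frac{T - \mu\, \id}{\lambda - \mu}, \]
where $\mu$ ranges over the remaining eigenvalues of $T$. Applying the polynomial result from the previous paragraph yields $[S, \pi_\lambda]\, U \subseteq U$, which is the conclusion of the lemma. The statement is essentially elementary linear algebra, and I do not foresee a substantive obstacle; the only mildly delicate point is the passage from the containment $\psi(t) u \in U$ for each $t$ to the containment of the $t$-derivative in $U$, which rests on $U$ being closed in the finite-dimensional ambient space.
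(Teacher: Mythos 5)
Your proof is correct. It shares with the paper the essential move of conjugating by $\varphi_t$, observing that the resulting curve stays inside the closed subspace $U$, and differentiating at $t=0$; but the two arguments diverge in how the projections $\pi_\lambda$ enter. The paper applies this move to $\pi_\lambda$ directly: since $T$ is symmetric and $U_t$ is $T$-invariant, $U_t$ splits as the orthogonal sum of its intersections with the eigenspaces of $T$, so $\pi_\lambda\varphi_t(u)\in U_t$ and hence $\varphi_t^{-1}\pi_\lambda\varphi_t(u)\in U$, and one differentiation yields $[\pi_\lambda,S]u\in U$ in a single stroke. You instead apply the move to $T$ itself, extracting only that $[S,T]$ preserves $U$, and then recover the projections algebraically via the Leibniz induction on $[S,T^k]$ and Lagrange interpolation. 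Your route is slightly longer but cleanly separates the analytic input (one differentiation involving only $T$) from the spectral input, and it gives the marginally more general conclusion that $[S,p(T)]$ preserves $U$ for every polynomial $p$; here the symmetry of $T$ is used only to identify the orthogonal projection $\pi_\lambda$ with the spectral projection $p_\lambda(T)$, whereas the paper uses it to see that $\pi_\lambda$ maps every $T$-invariant subspace into itself. The point you flag about $U$ being closed in order to pass to the derivative is the same (implicit) justification the paper's differentiation step rests on.
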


\begin{proof}
Let $u\in U$. Then $\varphi_t(u)\in U_t$. Since $U_t$ is $T$-invariant and $T$ is symmetric, the component $\pi_\lambda\varphi_t(u)\in U_t$, hence $\varphi^{-1}_t\pi_\lambda\varphi_t(u)\in U$ for all $t$. The desired condition is now obtained by differentiating with respect to~$t$ at~$0$.
\end{proof}

\begin{lem}\label{linalg2}
Under the assumptions of Lemma~\ref{linalg}, write $W=\oplus W_\lambda$ for the eigenspace decomposition of~$T$. If $u\in W_\lambda\cap U$ then $\pi_\mu(Su)\in U$ for all $\mu\neq\lambda$.
\end{lem}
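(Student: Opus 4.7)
The plan is to reduce Lemma~\ref{linalg2} to a one-line application of Lemma~\ref{linalg}, with the subtlety being the choice of which projection to plug in. The natural temptation is to use $[S,\pi_\lambda]$, where $\lambda$ is the eigenvalue attached to the vector $u$, but this only yields the weaker statement that $\sum_{\mu\neq\lambda}\pi_\mu(Su)$ lies in $U$. Instead, the right move is to apply Lemma~\ref{linalg} one eigenvalue at a time, taking the projector onto the \emph{target} eigenspace $V_\mu$.

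Concretely, I would fix $\mu\neq\lambda$ and invoke Lemma~\ref{linalg} (with $\mu$ in the role of $\lambda$) to conclude that $U$ is invariant under $[S,\pi_\mu]$. Then for $u\in V_\lambda\cap U$ I would compute
\begin{equation*}
[S,\pi_\mu]\,u \;=\; S(\pi_\mu u)-\pi_\mu(Su) \;=\; -\pi_\mu(Su),
\end{equation*}
where the second equality uses $\pi_\mu u=0$, which follows from $u\in V_\lambda$ and $\mu\neq\lambda$. Since the left-hand side lies in $U$, so does $\pi_\mu(Su)$, as claimed. There is no real obstacle here; Lemma~\ref{linalg} does all of the work, and the only thing to keep track of is that the projection must be chosen to annihilate $u$ so that the unwanted term $S(\pi_\mu u)$ drops out automatically.
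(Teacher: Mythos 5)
Your argument is correct and is essentially identical to the paper's one-line proof: both apply Lemma~\ref{linalg} with the projector $\pi_\mu$ onto the target eigenspace and use $\pi_\mu(u)=0$ to kill the term $S\pi_\mu(u)$ in the commutator, leaving $\pi_\mu(Su)\in U$. The only (immaterial) difference is the sign convention $[S,\pi_\mu]$ versus $[\pi_\mu,S]$.
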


\begin{proof}
By Lemma~\ref{linalg}, $\pi_\mu(Su) = \pi_\mu(Su) -S \pi_\mu(u) = [\pi_\mu,S]u\in U$ as desired.
\end{proof}

\begin{lem}\label{a12}
$\Ls\cap\La=\R X$ and $\Ls\cap\Lp_{\theta_1}=\Ls\cap\Lp_{\theta_2}=\{0\}$.
\end{lem}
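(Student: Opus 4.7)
The plan is to prove both identities by contradiction under the standing hypothesis of this subsection, namely that $\Ls\cap\Lp$ contains no singular vector; in particular this forces $t\in(0,\pi/4)$. In each case I would derive a contradiction by producing one of the singular vectors $X_{13}$ or $X_{24}$ inside $\Ls\cap\Lp$.

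For the first identity, since $\R X\subseteq\Ls\cap\La$ and $\dim\La=2$, a strict inequality would give $\La\subseteq\Ls$ and hence the singular vector $X_{13}\in\Ls\cap\Lp$, an immediate contradiction. So this part is essentially free.

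For the second identity, I would take a hypothetical nonzero $u=\sum_{k=5}^{n+2}a_k X_{1k}\in\Ls\cap\Lp_{\theta_1}$ and exploit the basic totally geodesic condition $R_u(\Ls)\subseteq\Ls$, computing $R_u$ from~(\ref{r-h}). The expected chain is: the commutators $[X_{1k},X_{24}]=0$ and $[X_{1k},X_{13}]=X_{3k}\in\Lh$ yield $[u,X]=\cos t\,\sum_k a_k X_{3k}$; the elementary identity $[X_{1j},X_{3k}]=-\delta_{jk}X_{13}$ then gives $\mathrm{ad}_u^2(X)=-|u|^2\cos t\,X_{13}$; and the $\langle X,Ju\rangle Ju$-correction in~(\ref{r-h}) vanishes since $Ju=\sum_k a_k X_{2k}\in\Lp_{\theta_2}$ is orthogonal to $X\in\La$. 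Assembling these, $R_u(X)=|u|^2\cos t\,X_{13}$, and since $\cos t>0$ this forces the singular vector $X_{13}$ into $\Ls$, a contradiction. The argument for $u\in\Ls\cap\Lp_{\theta_2}$ is entirely parallel, producing $X_{24}$ instead, which is Weyl-conjugate to $X_{13}$ and hence also singular.

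The only real step to execute is the matrix-commutator chain yielding $R_u(X)\propto X_{13}$, which is mechanical once $u$ is written in the $X_{1k}$-basis of $\Lp_{\theta_1}$. It is worth noting that this argument relies only on the zeroth-order invariance $R_u(\Ls)\subseteq\Ls$ for $u\in\Ls$, so Lemmas~\ref{linalg} and~\ref{linalg2} are not invoked here and are presumably reserved for the finer analysis in subsequent arguments.
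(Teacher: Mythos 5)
Your proof is correct, and the first identity is handled exactly as in the paper: if $\Ls\cap\La$ were larger than $\R X$ it would be all of~$\La$ and hence contain the singular vector~$X_{13}$. For the second identity, however, you take a genuinely longer route than necessary. The paper's entire proof is one sentence, resting on the observation that every nonzero element of~$\Lp_{\theta_1}=\Span\{X_{15},\ldots,X_{1,n+2}\}$ (and likewise of~$\Lp_{\theta_2}$) is \emph{itself already a singular vector}: it corresponds to a rank-one $2\times n$ block~$Z$, hence is $K$-conjugate to a multiple of~$X_{13}$ (orbit type $t=0$ in the notation of subsection~\ref{isotr}). So a hypothetical nonzero $u\in\Ls\cap\Lp_{\theta_1}$ contradicts the standing assumption directly, with no curvature computation needed. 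Your alternative — using $R$-invariance of~$\Ls$ and the commutator chain to get $R_u(X)=|u|^2\cos t\,X_{13}\in\Ls$ — is a valid and correctly executed argument (the vanishing of the $J$-correction terms and the identity $[X_{1j},X_{3k}]=-\delta_{jk}X_{13}$ both check out, and $\cos t\neq0$ since $t\in(0,\pi/4)$), but it only buys you independence from identifying the orbit type of root-space vectors, at the cost of a computation the paper avoids entirely. It is worth internalizing the simpler fact, since the same identification of singular vectors with rank-one (and equal-singular-value) matrices is used repeatedly in the rest of section~\ref{classif}.
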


\begin{proof}
In fact, if $\La\subset \Ls$ or  $\Ls\cap\Lp_{\theta_1}\neq\{0\}$ or $\Ls\cap\Lp_{\theta_2}\neq\{0\}$, then $\Ls$ would contain singular vectors, contrary to our assumption.
\end{proof}

\begin{lem}\label{vxpm}
The following statements are equivalent:
\begin{enumerate}
  \item $X_+\in\Ls$ or $X_-\in\Ls$;
  \item $V\in\Ls$;
  \item $X_+\in\Ls$ and $X_-\in\Ls$.
\end{enumerate}
\end{lem}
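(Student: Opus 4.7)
\emph{Plan.} The implication (iii)$\Rightarrow$(i) is immediate. For the other two implications I would apply Lemma~\ref{linalg2} with $T=R_X$ and $S=\nabla X$, the operator on~$\Lm$ associated by Tojo's criterion to $X\in\Ls\cap\La$; on the naturally reductive space~$M$ one has $(\nabla X)(Y)=\tfrac12[Y,X]_\Lm$, and since $(\nabla X)(X)=0$ the vector~$X$ lies in every transformed subspace $U_t=e^{t\nabla X}\Ls$, so $R_X$ preserves each~$U_t$, which is the hypothesis needed to invoke the lemma.

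For (ii)$\Rightarrow$(iii), I would take $u=V\in\Lz\cap\Ls$; recall $\Lz$ is the $\tfrac14$-eigenspace of~$R_X$. Up to a nonzero scalar, $SV=[X,V]_\Lm=JX\in J\La$. For $t\in(0,\pi/4)$ the eigenvalues $\lambda_\pm$ are both distinct from~$\tfrac14$, so Lemma~\ref{linalg2} yields $\pi_{\lambda_+}(JX), \pi_{\lambda_-}(JX)\in\Ls$. Since Section~\ref{sec:sing} already notes that $JX$ fails to be an eigenvector of $R_X|_{J\La}$ for $t\in(0,\pi/4)$, both projections onto the one-dimensional eigenlines $\R X_\pm$ are nonzero, and hence $X_\pm\in\Ls$.

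For (i)$\Rightarrow$(ii), suppose $X_+\in\Ls$ (the case $X_-\in\Ls$ is entirely analogous). Since $X_+$ spans the one-dimensional $\lambda_+$-eigenspace of~$R_X$, Lemma~\ref{linalg2} applied with $\mu=\tfrac14$ forces the $\Lz$-component of $SX_+\propto [X,X_+]_\Lm$ to lie in~$\Ls$. As $X$ and $X_+$ are horizontal, O'Neill's formula recalled earlier yields $[X,X_+]^v=\langle X,JX_+\rangle V$. Skew-symmetry of $J$ with respect to the invariant inner product on~$\Lp$ rewrites this as $-\langle JX,X_+\rangle V$, which is nonzero because $JX$ has nonzero $X_+$-component by the same observation invoked in the previous step. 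Hence $V\in\Ls$.

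\emph{Main obstacle.} The decisive input in both directions is the single fact, already recorded in Section~\ref{sec:sing}, that $JX$ is not an eigenvector of $R_X|_{J\La}$ when $t\in(0,\pi/4)$, which guarantees the nonvanishing of the relevant projections onto $\R X_\pm$. The less obvious step is recognizing that O'Neill's $A$-tensor expresses the vertical component of $[X,X_+]$ as the simple pairing $\langle X,JX_+\rangle V$, thereby reducing (i)$\Rightarrow$(ii) to a one-line skew-symmetry argument rather than a head-on computation of $X_+$ in the basis $\{X_{23}\pm X_{14}\}$.
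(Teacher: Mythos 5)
Your proof is correct and follows essentially the same route as the paper's: both apply Lemma~\ref{linalg2} with $T=R_X$ and $S=\pm\nabla X^*$ (the paper writes $SV=-\tfrac12 JX$ and $\langle SX_\pm,V\rangle=\tfrac12\langle JX,X_\pm\rangle$), and both hinge on the observation that $JX$ is not an eigenvector of~$R_X$ for $t\in(0,\pi/4)$, so its components along $X_\pm$ are nonzero. The only cosmetic difference is that you extract the $\Lz$-component of $[X,X_\pm]$ via the O'Neill $A$-tensor while the paper uses $\mathrm{ad}$-invariance of the inner product; these yield the same nonvanishing quantity.
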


\begin{proof}
Take $S=-\nabla X^*$, $U=\Ls$, $T=(R_X)_o$ and $W=\Lm$ in Lemma~\ref{linalg2}, where $X^*$ is the Killing field on~$M$ induced by $X\in\Lg$ and $\nabla$ denotes the Levi-Civita connection on~$M$. Due to Tojo's criterion~\ref{tojo}, $U_t$ is $T$-invariant for all $t$. Since $M$ is normal homogeneous, $(\nabla_YX^*)_o=\frac12[X,Y]_{\mathfrak m}$ for $Y\in\Lm$, where $\Lm$ is identified with $T_oM$.
Now
\begin{equation}\label{SV}
 SV=-\frac12JX,
\end{equation}
and
\begin{equation}\label{SXpm}
 \langle SX_\pm,V\rangle=-\frac12\langle [X,X_{\pm}],V\rangle =
\frac12\langle JX,X_{\pm}\rangle.
\end{equation}
For $t\in(0,\pi/4)$, $JX$ is not an eigenvector of~$R_X$ and therefore it is a linear combination of~$X_+$, $X_-$ with non-zero coefficients.

If $X_\pm\in\Ls$, then $X_\pm\in\Ls\cap\Lm_{\lambda_\pm}$, so $\pi_{1/4}(SX_\pm)\in\Ls$ by Lemma~\ref{linalg2}, and it is a non-zero multiple of~$V$ by~(\ref{SXpm}), which shows that $V\in\Ls$.

If $V\in\Ls$, then $V\in\Ls\cap\Lm_{1/4}$, so $\pi_{\lambda_\pm}(SV)\in\Ls$ by Lemma~\ref{linalg2}, and it is a non-zero multiple of~$X_\pm$ by~(\ref{SV}), showing that $X_\pm\in\Ls$.
\end{proof}

It follows from Lemma~\ref{vxpm} that $V$, $X_{\pm}\not\in\Ls$ (for otherwise $\Ls$ would contain singular vectors, since a certain linear combination of~$X_+$, $X_-$ is a singular vector). Together with Lemma~\ref{a12} this shows that $\Ls=\R X$, which contradicts the assumption that $\dim\Ls\geq2$. This proves that $\Ls$~contains a singular vector.

\subsection{The classification}

Let $\Ls$ be a totally geodesic subspace of~$\Lm$.

\begin{lem}\label{3}
If $Y$, $JY\in\Ls\cap\Lp$ and $[Y,JY]_{\mathfrak h}$ centralizes $\Span\{ Y,JY \}^\perp\cap\Lp\cap\Ls$, then $\Ls\cap\Lp$ is a complex subspace of~$\Lp$.
\end{lem}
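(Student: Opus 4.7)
The plan is to show that $J(\Ls\cap\Lp)\subset\Ls\cap\Lp$. Given $W\in\Ls\cap\Lp$, I would first decompose $W=W_1+W_2$ with $W_1\in\Span\{Y,JY\}$ and $W_2\in\Span\{Y,JY\}^\perp$. Since $Y,JY\in\Ls$ we have $W_2\in\Ls$, and $JW_1\in\Span\{Y,JY\}\subset\Ls$ automatically, so it suffices to treat the case $W\perp Y,JY$.

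For such $W$, the relevant bracket structure reduces nicely. The identity $[U,U']_\Lz=\langle U,JU'\rangle V$ for $U,U'\in\Lp$---a consequence of the adjoint-invariance of $\langle\cdot,\cdot\rangle$ together with $[U',V]=JU'$---specializes to $[Y,W]_\Lz=-\langle JY,W\rangle V=0$ and $[JY,W]_\Lz=\langle Y,W\rangle V=0$, so both brackets lie in $\Lh$. Applying the same identity to $[Y,JY]$ yields
\[ [Y,JY]=A+cV, \quad A:=[Y,JY]_\Lh, \quad c=\langle Y,J(JY)\rangle=-\|Y\|^2\neq 0, \]
and the centralization hypothesis gives $[A,W]=0$.

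The main step is to compute $R(Y,JY)W$ and show it is a nonzero multiple of $JW$. Using the standard curvature formula for a normal homogeneous space,
\[ R(X_1,X_2)X_3 = \frac{1}{4}[X_1,[X_2,X_3]_\Lm]_\Lm - \frac{1}{4}[X_2,[X_1,X_3]_\Lm]_\Lm - \frac{1}{2}[[X_1,X_2]_\Lm,X_3]_\Lm - [[X_1,X_2]_\Lh,X_3], \]
applied with $X_1=Y$, $X_2=JY$, $X_3=W$: the first two terms vanish since $[Y,W]_\Lm=[JY,W]_\Lm=0$, the last term vanishes by centralization, and only the third term survives, giving $R(Y,JY)W=-\frac{1}{2}[cV,W]=\frac{c}{2}JW=-\frac{\|Y\|^2}{2}JW$ (using $[V,W]=-JW$). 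Since $\Ls$ is totally geodesic it is $R$-invariant, so $R(Y,JY)W\in\Ls$; as $Y\neq 0$ this forces $JW\in\Ls$, hence $JW\in\Ls\cap\Lp$.

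The main subtlety is orchestrating the calculation so that the centralization hypothesis kills precisely the $[[Y,JY]_\Lh,W]$ term in the curvature formula while the orthogonality $W\perp Y,JY$ kills the two ``quartic'' terms; what remains is a clean nonzero multiple of $JW$ coming from the vertical part of $[Y,JY]$. If the general normal-homogeneous curvature formula is to be avoided, the identical expression for $R(Y,JY)W$ can be derived by polarizing~(\ref{r-h}) via the Jacobi operators $R_{Y+tW}(JY)$ and $R_{JY+tW}(Y)$ at first order in $t$ and combining with the first Bianchi identity.
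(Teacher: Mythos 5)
Your argument is correct and is essentially the paper's own proof: both reduce to $Z\in\Ls\cap\Lp$ orthogonal to $Y,JY$, compute $R(Y,JY)Z$, use the centralizing hypothesis to kill the contribution of $[Y,JY]_{\Lh}$ so that only the vertical part $[Y,JY]_{\Lz}=-\|Y\|^2V$ survives, obtain $R(Y,JY)Z=-\tfrac{\|Y\|^2}{2}JZ$, and conclude from the $R$-invariance of the totally geodesic subspace~$\Ls$. The only cosmetic difference is that you start from the general naturally reductive curvature formula whereas the paper invokes its O'Neill-derived identity~(\ref{f}) (the two agree term by term once $[X,Y]_{\Lz}=\langle X,JY\rangle V$ is substituted), and you spell out the reduction to $Z\perp Y,JY$ that the paper dispatches with ``we may assume''.
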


\begin{proof}
We will use the formula
\begin{equation}\label{f}
 R(X,Y)Z=-[[X,Y],Z] -\frac12\langle X,JY\rangle JZ+\frac14\langle Y,JZ\rangle
JX+\frac14\langle Z,JX\rangle JY
\end{equation}
for $X$, $Y$, $Z\in\Lp$,
which is obtained from~\cite[p.~44]{gw}.

Let $Y$, $JY$ be an orthonormal pair in~$\Ls\cap\Lp$ as in the statement. Given $Z\in\Ls\cap\Lp$, we want to show that $JZ\in\Ls\cap\Lp$. We may assume that $Z\perp Y$, $JY$. Now~(\ref{f}) gives \[ R(Y,JY)Z=-[[Y,JY],Z] +\frac12 JZ. \] By our assumption, \[ [[Y,JY],Z]=[[Y,JY]_{\mathfrak z},Z]=[-V,Z]=JZ, \] so $R(Y,JY)Z =  -\frac12JZ$. Since $\Ls$ is $R$-invariant, this implies that $JZ\in\Ls\cap\Lp$, as desired.
\end{proof}

We proceed with the classification. According to Section~\ref{sec:sing}, we may assume $X_{13}\in\Ls$ or $X_{13}+X_{24}\in\Ls$.

\subsection{The case $X=X_{13}\in\Ls$}\label{sec:x13}

We need the eigenspace decomposition of~$R_X$, which is given
by Table~\ref{2}.
\begin{table}
\begin{tabular}{ccc}
eigenvalue & eigenspace & multiplicity \\
\hline
 $0$ & $\La+\Lp_{\theta_2}$ & $n$ \\
$1/4$ & $\Span\{X_{12}, X_{23}\}$ & $2$ \\
$1$ & $\R X_{14}+\Lp_{\theta_1}$ & $n-1$ \\
\end{tabular}
\caption{Eigenspace decomposition of~$R_X$ for $X=X_{13}$.}\label{2}
\end{table}
We divide the discussion into two cases.

\subsubsection{Complex case.}

Assume $\Ls$ has a component in the $1/4$-eigenspace $\Span \{ V,JX \}$.  Let $0\neq Y =aV+bJX\in\Ls$ for some $a$, $b\in\R$. Note that
\[
\nabla_YX^* = \frac12[X,Y]_{\mathfrak m}=\frac12(-bV+aJX).
\]
Let $\tau_t$ be the parallel displacement along the geodesic $\gamma(t)=\exp(tX)$ from $0$ to $t$. We use the formula~\cite[Lemma~3.1]{tojo} \begin{equation}\label{tau} d(L_{\exp(tX)})^{-1}\tau_t(Y)=e^{-t\nabla X}\cdot Y, \end{equation} which is clearly a rotation in the $\Span\{V,JX\}$-plane. Since $\Sigma$ is invariant under $\tau_t$, by changing the base point we may assume that $X$, $JX\in\Ls$. Since the remainder of~$\Ls\cap\Lp$ is spanned by $1$- and $0$-eigenvectors of~$R_X$, Lemma~\ref{3} implies that, up to the action of $\SO{n-1}\subset H$, we have
\[
\Ls\cap\Lp=\Span\{X_{13},\ldots,X_{1k},X_{23},\ldots,X_{2k}\}
\]
for some $3\leq k\leq n+2$.

It remains to be decided whether $V\in\Ls$. Note that $\Sigma$ can be regarded as a totally geodesic submanifold of $V_2(\R^k)\subset M$ of codimension at most $1$. Thanks to the main theorem in~\cite{tsukada}, we know that a simply connected irreducible naturally reductive homogeneous space can admit a totally geodesic hypersurface if and only if it has constant curvature. Since $M=V_2(\R^{n+2})$ is simply connected for $n\geq2$ (as can be seen from the long exact sequence in homotopy of the fibration $\SO{n}\to \SO{n+2}\to M$) and holonomy irreducible for $n\geq3$ (see e.g.~\cite[Proposition~4.1]{mucha}), we must have $V\in\Ls$ if $n\geq3$, i.e.~ $\Sigma=V_2(\R^k)$. Alternatively, $V\in\Ls$ and
$\Sigma=V_2(\R^k)$ for all $n\geq2$ by Lemma~\ref{t-r} below.

\subsubsection{Real case.}

Assume $\Ls$ has no component in the $1/4$-eigenspace $\Span\{V,JX\}$. Then $\Ls=\R X +\Lv+\Lw\subset\Lp$ where $\Lv$ is a subspace of
\[
(\La\ominus\R X)+\Lp_{\theta_2}=\Span \{ X_{24},\ldots,X_{2,n+2} \} ,
\]
and
$\Lw$ is a subspace of
\[
\R X_{14}   + \Lp_{\theta_1}= \Span \{ X_{14},\ldots,X_{1,n+2} \} .
\]
Since $\dim\Ls\geq2$, either $\Lv\neq0$ or $\Lw\neq0$, and we may assume $\Lw\neq0$ (the other case is similar). Using the action of $\SO{n-1}\subset H$, we can assume that
\[
\Lw = \Span \{ X_{14},\ldots,X_{1,p+2} \}
\]
for some $2\leq p\leq n$.

\begin{lem}
$\Lv\perp J\Lw$.
\end{lem}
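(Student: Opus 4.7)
The plan is to argue by contradiction: suppose some $Z\in\Lv$ has non-zero component $c_k:=\langle Z,X_{2k}\rangle$ for some $k\in\{4,\dots,p+2\}$, and I would derive that the $1/4$-eigenspace $\Span\{V,JX\}$ of $R_X$ meets $\Ls$ non-trivially, contradicting the standing real-case assumption. Set $Y:=X_{1k}\in\Lw\subset\Ls$, a unit vector with $JY=X_{2k}$, and expand $Z=\sum_{l=4}^{n+2}c_l X_{2l}\in\Lv\subset\Ls$.

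The key step is the Jacobi operator computation $R_Y(Z)=\tfrac{c_k}{4}X_{2k}$. It rests on the elementary bracket identity $[X_{1k},X_{2l}]=-\delta_{kl}V$ for $k,l\ge 3$: the $-\mathrm{ad}_Y^2$ part of~(\ref{r-h}) vanishes on $X_{2l}$ for $l\neq k$ and produces $X_{2k}$ when $l=k$, while the second term of~(\ref{r-h}) contributes $-\tfrac34\,X_{2k}$ precisely when $l=k$, using $\langle X_{2l},JY\rangle=\delta_{kl}$. Since $\Ls$ is preserved by every Jacobi operator $R_Y$ with $Y\in\Ls$, we conclude $X_{2k}\in\Ls$.

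I would then invoke Lemma~\ref{3} with the orthonormal pair $Y=X_{1k}$, $JY=X_{2k}$. Its hypothesis is automatic here: $[Y,JY]=-V\in\Lz$, so $[Y,JY]_{\Lh}=0$ and the centralizer condition is trivially satisfied. The lemma then forces $\Ls\cap\Lp$ to be a complex subspace of $\Lp$; in particular $JX=JX_{13}=X_{23}\in\Ls$. But $X_{23}\in\Span\{V,JX\}$, the $1/4$-eigenspace of $R_X$, contradicting our working assumption.

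The only real hurdle is recognizing that Lemma~\ref{3} is immediately available---the centralizer hypothesis becomes vacuous because the commutator $[X_{1k},X_{2k}]$ lands entirely in the one-dimensional vertical subspace $\Lz$ rather than in $\Lh$. Once this is spotted, the argument reduces to a short structural computation.
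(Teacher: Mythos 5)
Your proof is correct. The bracket identity $[X_{1k},X_{2l}]=-\delta_{kl}V$ for $k,l\ge3$ is right, the computation $R_Y(Z)=\tfrac{c_k}{4}X_{2k}$ checks out against~(\ref{r-h}), and since $[X_{1k},X_{2k}]=-V$ lies in $\Lz$ the centralizer hypothesis of Lemma~\ref{3} is indeed vacuous, so $\Ls\cap\Lp$ becomes complex and $JX=X_{23}\in\Ls$ contradicts the standing real-case assumption. Your route is genuinely different from the paper's: the paper writes $Z=JY+W$ with $Y\in\Lw$ a unit vector and $W\perp J\Lw$, computes $\nabla_ZY^*=\tfrac12[Y,Z]_{\Lm}=-\tfrac12V$, and applies the first-order consequence of Tojo's criterion (Lemma~\ref{linalg2} with $S=-\nabla Y^*$, $T=R_X$) to conclude $V\in\Ls$ directly; you instead stay entirely at the curvature level, using only the $R$-invariance of~$\Ls$ (the Jacobi operator $R_Y$ to extract $X_{2k}$, then Lemma~\ref{3} to produce $JX$). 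Both arguments hinge on the same bracket $[Y,JY]=-V$, and both land in the forbidden $1/4$-eigenspace $\Span\{V,JX\}$, but yours is more elementary in that it avoids the parallel-transport/covariant-derivative machinery, at the cost of one extra step (passing through Lemma~\ref{3}); the paper's is a one-line application of its general eigenspace-transfer lemma.
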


\begin{proof} Suppose, to the contrary, that $Z\in\Lv$ has a component
in~$J\Lw$, say $Z=JY+W$, where $Y\in\Lw$ is a unit vector and
$W\perp J\Lw$.
Since
\[ \nabla_ZY^*=\frac12[Y,Z]_{\mathfrak m} = \frac12([Y,JY]_{\mathfrak z}
  +[Y,W]_{\mathfrak z}) = -\frac12 V, \]
$Z$ is a $0$-eigenvector of~$R_X$, and $V$ is an $1/4$-eigenvector
of~$R_X$,  Lemma~\ref{linalg2} implies that $V\in\Ls$, a contradiction
to our assumption in this subsection. \end{proof}

Now $\Lv$ is orthogonal to
\[
J\Lw = \Span \{ X_{24},\ldots,X_{2,p+2} \} ,
\]
so, up to the action of~$H$,
\[
\Lv= \Span \{ X_{2,p+3},\ldots,X_{2,p+2+q} \}
\]
for some $0\leq q\leq n-p$. This shows that $\Ls$ is precisely the tangent space at~$o$ of the totally geodesic embedding of~$S^p(1)\times S^q(1)$ into~$M$, a Riemannian product of spheres of constant curvature~$1$.

\begin{lem}\label{t-r}
Let $\Sigma$ be a totally geodesic submanifold of~$M$ such that $\Ls=T_o\Sigma\subset\Lp$ (i.e.~$\Sigma$ is horizontal at~$o$ with respect to $\pi \colon M\to B$). Then $\Sigma$ is horizontal everywhere and totally real.
\end{lem}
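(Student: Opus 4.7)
The plan is to prove the lemma in two logical steps: first establish that $\Ls$ is \emph{totally real} at the basepoint~$o$ (i.e.\ $\langle X,JY\rangle=0$ for all $X,Y\in\Ls$), and then leverage this algebraic fact to conclude horizontality along every geodesic emanating from~$o$, which via homogeneity propagates to all of $\Sigma$.

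For the first step, I would apply Tojo's criterion at first order exactly as in the proof of Lemma~\ref{vxpm}: take $S=-\nabla X^*$, $T=(R_X)_o$, $U=\Ls$ in Lemma~\ref{linalg2}, for arbitrary $X\in\Ls$. Because $\Ls\subset\Lp$ and $[\Lp,\Lp]\subset\Lh+\Lz$, one computes $SY=-\tfrac12\langle X,JY\rangle V$ for any $Y\in\Ls$. Since $V$ is an eigenvector of $R_X$ with eigenvalue $|X|^2/4$, Lemma~\ref{linalg2} forces, for any eigenvector $Y\in\Ls\cap V_\lambda$ with $\lambda\neq|X|^2/4$, that $\pi_{|X|^2/4}(SY)=SY=-\tfrac12\langle X,JY\rangle V\in\Ls$. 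But $\Ls\cap\Lz=0$, so $\langle X,JY\rangle=0$. Choosing $X$ generically in $\Ls$ so that $|X|^2/4$ is not an eigenvalue of~$R_X|_\Ls$ (a dense open condition, since $R_X$ depends algebraically on $X$ and the bad locus is a proper subvariety—proper because, e.g.\ for $X$ near~$0$ the spectrum of $R_X|_\Ls$ is close to~$0$ while $|X|^2/4$ scales with $|X|^2$), we get $\langle X,JY\rangle=0$ for every $Y\in\Ls$, and continuity in $X$ extends this to all $X\in\Ls$.

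For the second step, totally-realness implies that the linear operator $\nabla X^*$ on~$\Lm$ sends $Y\in\Ls\subset\Lp$ to $\tfrac12\langle X,JY\rangle V=0$, so $\nabla X^*$ annihilates~$\Ls$ for every $X\in\Ls$. Therefore $e^{-t\nabla X}(\Ls)=\Ls$, and formula~(\ref{tau}) yields
\[
T_{\gamma(t)}\Sigma=d(L_{\exp(tX)})(\Ls)\subset d(L_{\exp(tX)})(\Lp)=\mathcal{H}_{\gamma(t)},
\]
using that the horizontal distribution is $G$-invariant because $\pi\colon M\to B$ is $G$-equivariant. Thus $\Sigma$ is horizontal along $\gamma(t)=\exp(tX)\cdot o$ for every $X\in\Ls$. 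Since $\Sigma$ is complete and totally geodesic, $\exp_o|_{\Ls}$ surjects onto $\Sigma$, so $\Sigma$ is horizontal at every point; at any other point $p\in\Sigma$ the argument of the first step (carried out with~$p$ as basepoint, using an isometry of $M$ moving $o$ to $p$) shows that $\Sigma$ is again totally real at~$p$.

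The main obstacle is the genericity claim in the first step: one must verify that the subset of~$\Ls$ where $|X|^2/4$ occurs as an eigenvalue of $R_X|_\Lp$ is a proper algebraic subvariety, so that the argument of Lemma~\ref{linalg2} has full force on a dense set. This follows because, using the eigenvalue list in Table~\ref{1} together with the $K$-equivalence of any $X$ to an element of the Cartan subspace~$\La$, the condition $\tfrac14\in\mathrm{spec}\,R_{X/|X|}|_\Lp$ is an algebraic condition cutting out a lower-dimensional $K$-invariant subset of the unit sphere in~$\Lp$, which a nonzero $\Ls$ cannot entirely lie in; the complement then provides the generic $X$ needed, and continuity closes the argument.
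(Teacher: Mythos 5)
Your second step is sound and is essentially the paper's argument in a slightly different packaging: the paper parallel-transports the vertical Killing field~$V^*$ along $\gamma$ via~(\ref{tau}) and checks that it stays in $\Span\{V,JY\}\perp\Ls$, whereas you transport $T\Sigma$ itself; both reduce to the same computation $\nabla_Z Y^*=\tfrac12[Y,Z]_{\Lz}=-\tfrac12\langle Y, JZ\rangle V=0$ for $Y,Z\in\Ls$, which is exactly total-realness of~$\Ls$. The genuine gap is in your first step. The condition ``$|X|^2/4\notin\mathrm{spec}(R_X|_{\Ls})$'' is never shown to hold for a single $X\in\Ls$, let alone generically, and both justifications you offer fail. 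First, $R_X=R(\cdot,X)X$ is homogeneous of degree two in~$X$, so $\mathrm{spec}(R_X|_{\Ls})$ rescales by exactly the same factor $|X|^2$ as the vertical eigenvalue $|X|^2/4$; the condition is scale-invariant and your small-$|X|$ comparison is vacuous. Second, the claim that a nonzero $\Ls$ cannot lie entirely inside the $K$-invariant set where $\tfrac14\in\mathrm{spec}(R_{X/|X|}|_{\Lp})$ is false: by Table~\ref{2} that set contains the whole cone of decomposable (rank-one) elements of $\Lp\cong M_{2\times n}(\R)$, i.e.\ the $t=0$ singular orbit, where $\lambda_-=\tfrac14$; and the totally geodesic subspace $\Ls=\Span\{X_{13},\dots,X_{1,n+2}\}$ tangent to the round sphere $S^n(1)=V_1(\R^{n+1})\subset M$ consists entirely of such vectors. (For that particular $\Ls$ the conclusion survives because the $\tfrac14$-eigenspace of $R_X$ inside $\Lp$ is only $\R JX$, which misses~$\Ls$ --- but that is an observation about $\mathrm{spec}(R_X|_{\Ls})$ that your genericity scheme does not deliver.)

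The configuration your argument genuinely cannot reach is the essential one: $\Ls$ containing a decomposable $Y$ together with $JY$, the extreme case being $\Ls=\Span\{X_{13},X_{23}\}$, which contains no regular vectors at all. For decomposable $Y$ the vector $JY$ lies in the \emph{same} eigenspace of $R_Y$ as $V$ (eigenvalue $\tfrac14|Y|^2$), so Lemma~\ref{linalg2} gives no information and no choice of $X\in\Ls$ rescues the first-order argument; yet this is precisely the case where $\langle Y,J(JY)\rangle=-|Y|^2\neq0$ must be excluded. Ruling it out is the content of the ``complex case'' in subsection~\ref{sec:x13}, where the paper has to invoke Tsukada's theorem on totally geodesic hypersurfaces~\cite{tsukada} (or go beyond the first-order consequences of Tojo's criterion); some such additional input is unavoidable here. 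In fairness, the paper's printed proof of this lemma disposes of total-realness at~$o$ in a single unproved sentence, so you are attempting more than it does --- but the genericity argument as written does not close the gap.
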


\begin{proof}
Since $\Sigma$ is horizontal at~$o$, $\Ls$ is totally real. We want to show that~$V_p^*\perp T_p\Sigma$ for all $p\in\Sigma$. Writing $p=go$ for $g=\exp (tY)$ and $Y\in\Ls\subset\Lp$, this is equivalent to $\tau_t^{-1}(V^*_p)\perp\Ls$, where $\tau_t$ is parallel displacement along the geodesic $\gamma(t)=\exp(tY)$ from $0$ to $t$. Thanks to equation~(\ref{tau}), \[  \tau_t^{-1}(V^*_p) =  e^{t\nabla Y^*}\cdot d(L_g)_o^{-1}(V_{go}^*)= e^{t\nabla Y^*}\cdot(\mathrm{Ad}_{g^{-1}}V)^*_o= e^{t\nabla Y^*}\cdot(e^{-t\mathrm{ad}_Y}\cdot V)_{\mathfrak m}. \] Since $[Y,V]=JY$, $[Y,JY]_{\mathfrak m}=-V$ and $\nabla Y^*=\frac12[Y,\cdot]_{\mathfrak m}$, this is a vector in the plane $\Span \{ V,JY \} $. But $JY\perp\Ls$, so $\Sigma$ is horizontal at~$p$.
Since $\nabla V^*|_{\mathcal H}=-\frac12J$, this also implies that $\Sigma$ is totally real at~$p$.
\end{proof}

\subsection{The case $X=\frac1{\sqrt2}(X_{13}+X_{24})\in\Ls$}

We may also assume that $\Ls$ does not contain a singular vector of the other type, namely, with isotropy algebra isomorphic to $\mathfrak{so}_{n-1}$, for otherwise we would be in the case of Subsection~\ref{sec:x13}. Note that here $X_+=\frac1{\sqrt2}(X_{23}-X_{14})=JX\in\Lp_{\theta_1+\theta_2}$ and $X_-=\frac1{\sqrt2}(X_{23}+X_{14})\in\Lp_{\theta_1-\theta_2}$, where $\lambda_+=\frac54$, $\lambda_-=0$, and $\Lp_{\theta_1}$ and $\Lp_{\theta_2}$ collapse to the same~$\frac12$-eigenspace of~$R_X$.

\begin{lem}\label{sxm}
$\Ls$ does not contain~$X_-$.
\end{lem}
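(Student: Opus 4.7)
The plan is to argue by contradiction, exploiting linearity of~$\Ls$ together with the matrix description of~$\Lp$ from subsection~\ref{subs:redec}.

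Suppose for contradiction that $X_-\in\Ls$. Since $X\in\Ls$ by hypothesis and $\Ls$ is a linear subspace, I would first observe that
\[
X+X_- = \tfrac{1}{\sqrt 2}\bigl(X_{13}+X_{14}+X_{23}+X_{24}\bigr)\in\Ls.
\]
Under the identification of~$\Lp$ with $M_{2\times n}(\R)$ in which $X_{ij}$ (for $i\in\{1,2\}$ and $j\in\{3,\dots,n+2\}$) corresponds to the matrix unit carrying entry~$1$ at position~$(i,j-2)$, this sum is represented by
\[
Z \;=\; \tfrac{1}{\sqrt 2}\begin{pmatrix}1&1&0&\cdots&0\\1&1&0&\cdots&0\end{pmatrix} \;=\; \tfrac{1}{\sqrt 2}\begin{pmatrix}1\\1\end{pmatrix}\begin{pmatrix}1&1&0&\cdots&0\end{pmatrix},
\]
that is, by a rank-one outer product.

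Next I would invoke the orbit-type description recalled in subsection~\ref{isotr}: the nonzero $K$-orbits in~$\Lp$ are parametrized by the pair of singular values $(\sigma_1,\sigma_2)$ of~$Z$, and the two singular types correspond to $\sigma_2=0$ (the $K$-orbit of~$X_{13}$, with isotropy algebra $\mathfrak{so}_{n-1}$) and $\sigma_1=\sigma_2>0$ (the $K$-orbit of~$X$, with isotropy algebra $\mathfrak{so}_{n-2}\oplus\mathfrak{so}_2$). Since the matrix $Z$ computed above has rank one, that is $\sigma_2=0$, the vector $X+X_-$ lies in the singular $K$-orbit of the first type. This contradicts the standing hypothesis of this subsection that~$\Ls$ contains no singular vector with isotropy~$\mathfrak{so}_{n-1}$, completing the proof.

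I do not expect any serious obstacle; the argument collapses to one matrix identity. A point worth flagging is that the Jacobi-operator approach used in Lemma~\ref{vxpm} is \emph{not} directly available here, because a short bracket computation yields $[X,X_-]=0$, so $-\nabla X^*$ annihilates $X_-$ and cannot shuffle it into another $R_X$-eigenspace. The linearity-based shortcut above sidesteps this difficulty entirely and produces the forbidden vector in~$\Ls$ directly.
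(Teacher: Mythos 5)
Your proof is correct and follows essentially the same route as the paper: both arguments derive the contradiction by locating a singular vector of isotropy type $\mathfrak{so}_{n-1}$ inside $\Ls$, using only linearity of $\Ls$ together with $X,X_-\in\Ls$. The paper reaches that vector structurally (since $[X,X_-]=0$, the pair spans a $K$-conjugate of $\La$, which necessarily meets the forbidden singular orbit type), whereas you exhibit it explicitly as the rank-one element $X+X_-$; your closing remark that the Jacobi-operator mechanism of Lemma~\ref{vxpm} is unavailable here because $[X,X_-]=0$ is also accurate.
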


\begin{proof}
Since $\theta_1(X)=\theta_2(X)$,  we have that $X_-$ centralizes $X$. Every pair of commuting elements of~$\Lp$ spans a subspace which is conjugate to $\La$ under the $K$-action. If $X_-\in\Ls$, then $\Ls$ contains a $K$-conjugate of~$\La$, and thus contains a singular vector of the other type, contrary to our assumption.
\end{proof}

\begin{lem}\label{vxm}
$V\in\Ls$ if and only if $X_+=JX\in\Ls$. In this case $\Ls\cap \Lp$ is a complex subspace of~$\Lp$.
\end{lem}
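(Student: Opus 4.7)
\emph{Proof plan.} The plan is to mirror the template of Lemma~\ref{vxpm} for the equivalence, and then to apply Lemma~\ref{3} for the complex-subspace claim, the main work being to verify its centralization hypothesis. For the equivalence I would apply Lemma~\ref{linalg2} with $S = -\nabla X^*|_o$, $T = (R_X)_o$ and $U = \Ls$. By normal homogeneity, $SV = -\tfrac12[X,V]_\Lm = -\tfrac12 X_+$ and $\langle SX_+, V\rangle = \tfrac12\langle JX, X_+\rangle = \tfrac12$. Since $V$ and $X_+$ lie respectively in the $\tfrac14$- and $\tfrac54$-eigenspaces of $R_X$, projecting $SV$ onto the $\tfrac54$-eigenspace (when $V \in \Ls$) or $SX_+$ onto the $\tfrac14$-eigenspace (when $X_+ \in \Ls$) through Lemma~\ref{linalg2} yields both implications.

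Assume now $V, X_+ \in \Ls$. To invoke Lemma~\ref{3} with $Y = X$, I would first compute $[X, X_+] = -V + X_{34}$ in $\mathfrak{so}_{n+2}$, so that $[X, X_+]_\Lh = X_{34}$. Since $X_{34}$ commutes with each $X_{1j}, X_{2j}$ for $j \geq 5$, it centralizes $\Lp_{\theta_1} \oplus \Lp_{\theta_2}$; hence the hypothesis of Lemma~\ref{3} would follow once $W := \Span\{X, X_+\}^\perp \cap \Lp \cap \Ls$ is shown to lie in $\Lp_{\theta_1} \oplus \Lp_{\theta_2}$. Within $\Span\{X, X_+\}^\perp \cap \Lp$, the symmetric operator $R_X$ has eigenspaces $\Span\{X', X_-\}$ (eigenvalue $0$, with $X' := \tfrac{1}{\sqrt{2}}(X_{13}-X_{24})$) and $\Lp_{\theta_1} \oplus \Lp_{\theta_2}$ (eigenvalue $\tfrac12$), so $W$ itself splits along this decomposition and the remaining task is to show $\Ls \cap \Span\{X', X_-\} = 0$. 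Suppose for contradiction that $Y = bX' + cX_- \in \Ls$ with $(b, c) \neq 0$. Using $JX' = X_-$, $JX_- = -X'$, $[X_{34}, X'] = -X_-$, $[X_{34}, X_-] = X'$ and $[V,\cdot]|_\Lp = -J$, a direct computation shows that the $\Lz$- and $\Lh$-contributions $-[V,Y]$ and $[X_{34},Y]$ to $[[X, X_+], Y]$ are negatives of each other, so formula~(\ref{f}) collapses to $R(X, X_+)Y = \tfrac12 JY = \tfrac{b}{2}X_- - \tfrac{c}{2}X' \in \Ls$. This vector is linearly independent from $Y$ whenever $(b, c) \neq 0$, so both $X'$ and $X_-$ would then lie in $\Ls$, whence $X + X' = \sqrt{2}\,X_{13} \in \Ls$ would be a singular vector of the type excluded at the beginning of this subsection.

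The main obstacle I anticipate is precisely this last reduction, because $R_X$ has $X$, $X'$ and $X_-$ all in its zero-eigenspace and is therefore blind to any mixture $bX' + cX_-$; pinning such combinations down genuinely requires the two-argument curvature $R(X, X_+)$, together with the fortuitous cancellation between the vertical and horizontal parts of $[X, X_+]$.
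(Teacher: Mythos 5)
Your argument is correct and follows the paper's own route: the equivalence is obtained exactly as in Lemma~\ref{vxpm} via Lemma~\ref{linalg2} applied to $S=-\nabla X^*$ and the $\tfrac14$- and $\tfrac54$-eigenspaces of $R_X$, and the complex-subspace claim by verifying the hypothesis of Lemma~\ref{3} for the pair $X$, $JX$ (with $[X,JX]_{\Lh}=X_{34}$ centralizing $\Lp_{\theta_1}+\Lp_{\theta_2}$). The only, harmless, divergence is in showing $\Ls\cap\Span\{X',X_-\}=0$: you use the curvature operator $R(X,X_+)$ and the cancellation $[[X,X_+],Y]=0$, whereas the shorter argument implicit in the paper is that any nonzero $Y\in\Span\{X',X_-\}$ commutes with $X$, so $\Span\{X,Y\}$ would be $K$-conjugate to $\La$ and $\Ls$ would contain a singular vector of the excluded type, exactly as in Lemma~\ref{sxm}.
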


\begin{proof}
The first statement can be proved as Lemma~\ref{vxpm}.
The second statement follows from Lemma~\ref{3}.
\end{proof}

\begin{lem}\label{36}
If $n\geq3$ and $Y\in\Ls\cap(\Lp_{\theta_1}+\Lp_{\theta_2})$, then $Y=X_{15}+X_{26}$, up to the $H$-action.
\end{lem}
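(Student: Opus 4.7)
The plan is to compute $R(X,Y)Y$ via formula~(\ref{f}) and use $R$-invariance of~$\Ls$ to force $|\alpha|=|\beta|$ and $\langle \alpha, \beta \rangle = 0$; the $H$-action then normalizes $Y$ to $X_{15}+X_{26}$.

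First I would parameterize $Y=\sum_{j=5}^{n+2}(a_j X_{1j}+b_j X_{2j})$ and set $\alpha=(a_5,\ldots,a_{n+2})$, $\beta=(b_5,\ldots,b_{n+2})$ in~$\R^{n-2}$. Formula~(\ref{f}) simplifies drastically in this setting: $X\in\La$ and $JX=X_+\in\Lp_{\theta_1+\theta_2}$ are both orthogonal to $Y,JY\in\Lp_{\theta_1}+\Lp_{\theta_2}$ (distinct root subspaces), and $\langle Y,JY\rangle=0$ since $J$ is skew. Hence the three inner-product correction terms vanish and $R(X,Y)Y=-[[X,Y],Y]$. A direct calculation in $\mathfrak{so}_{n+2}$ gives $[X,Y]=\tfrac{1}{\sqrt 2}\sum_j(-a_j X_{3j}-b_j X_{4j})\in\Lh$, and then $[X_{3j},X_{1j}]=X_{13}$, $[X_{3j},X_{2j}]=X_{23}$, $[X_{4j},X_{1j}]=X_{14}$, $[X_{4j},X_{2j}]=X_{24}$ (combined with the vanishing of brackets with non-coinciding column indices) yield
\[
R(X,Y)Y=\tfrac{|\alpha|^2+|\beta|^2}{2}\,X+W,\qquad W:=\tfrac{|\alpha|^2-|\beta|^2}{2}\,Z+\langle\alpha,\beta\rangle\,X_-,
\]
where $Z:=\tfrac{1}{\sqrt 2}(X_{13}-X_{24})\in\La$.

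The crux is then to show $W=0$. Since $X\in\Ls$ and $\Ls$ is $R$-invariant, $W\in\Ls\cap\Lp$. Suppose for contradiction $W\neq 0$. Then $W$ is linearly independent of~$X$ (in~$W$ the coefficients of $X_{13}$ and $X_{24}$ have opposite signs, while in~$X$ they are equal) and commutes with~$X$: $[X,Z]=0$ since both lie in~$\La$, and $[X,X_-]=0$ because $\theta_1(X)=\theta_2(X)$ (as already used in Lemma~\ref{sxm}). Hence $\Span\{X,W\}\subset\Ls\cap\Lp$ is a $2$-dimensional abelian subspace of~$\Lp$; since $\rk B=\dim\La=2$, it is a Cartan subspace of~$\Lp$, $K$-conjugate to~$\La$, and therefore contains non-zero singular vectors of type~$\SOxO{n-1}{1}$. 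These would lie in~$\Ls$, contradicting the standing assumption of the subsection. Hence $W=0$, which gives $|\alpha|=|\beta|$ and $\langle\alpha,\beta\rangle=0$.

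To conclude, I would rule out $\alpha=0$ (otherwise $Y$ reduces, under the $SO(n-2)$-stabilizer of~$X$ in~$H$, to a non-zero multiple of~$X_{25}$, a singular vector of the excluded type) and symmetrically $\beta=0$. With $|\alpha|=|\beta|>0$ and $\alpha\perp\beta$, the $SO(n-2)$-action inside~$H$ sends $\alpha$ to $|\alpha|\,e_1$, and the residual $SO(n-3)$-stabilizer of~$\alpha$ sends $\beta$ to $|\alpha|\,e_2$; after rescaling, $Y=X_{15}+X_{26}$. The main technical hurdle is the bracket computation leading to the explicit formula for $R(X,Y)Y$; once that is in hand, the Cartan-subspace argument is short and conceptual.
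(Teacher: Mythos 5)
Your proposal is correct and follows essentially the same route as the paper: both compute $R(X,Y)Y=[Y,[X,Y]]$ and derive a contradiction from the fact that the resulting element of $\Ls\cap(\La+\R X_-)$ commutes with $X$ (hence would produce a two-dimensional abelian subspace of $\Ls\cap\Lp$ and a singular vector of the excluded type) unless it is proportional to~$X$. The only difference is organizational: the paper first normalizes $Y$ by the $\SO{n-2}$-action and then computes, whereas you compute with general $\alpha,\beta$ and normalize at the end.
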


\begin{proof}
Using the action of~$\SO{n-2}$, we may assume that $Y=X_{15}+a X_{25}+ b X_{26}$ with $a^2+b^2\neq0$ and $b>0$ if $n\geq4$ (since $\Ls$ does not contain a singular vector of the other type), and $Y=X_{15}+a X_{25}$ with $a\neq0$ if $n=3$. We have $\sqrt2[X,Y]=-X_{35}-aX_{45}-bX_{46}\in\Lh$. Now we have $\sqrt2R_Y(X)=[Y,[X,Y]]=X_{13}+a\sqrt2X_-+(a^2+b^2)X_{24}\neq0$ and it belongs to $\Ls\cap(\La+\R X_-)$, a contradiction to the fact that $\La+\R X_-$ centralizes $X$ (as in Lemma~\ref{sxm}), 
unless $n\geq4$, $a=0$ and $b=1$. In this case $Y=X_{15}+X_{26}$, as desired.
\end{proof}

Clearly $X_{15}+X_{26}$ is $H$-conjugate to $X_{13}+X_{24}=\sqrt 2 X$, so we 
deduce from Lemmas~\ref{sxm} and~\ref{vxm} that

\begin{lem}\label{37}
Suppose $Y=X_{15}+X_{26}\in\Ls$. Then $\Ls\cap\Span \{ X_{25},X_{16} \}  \subset \Span \{ JY=X_{25}-X_{16} \}$. Moreover, the following are equivalent:
\begin{enumerate}
  \item $JY\in\Ls$,
  \item $V\in\Ls$,
  \item $\Ls\cap \Lp$ is complex.
\end{enumerate}
\end{lem}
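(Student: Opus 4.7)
The plan is to establish the inclusion $\Ls \cap \Span\{X_{25}, X_{16}\} \subset \R JY$ by combining the curvature formula~(\ref{f}) with the standing assumption of Subsection~3.4 that $\Ls$ contains no singular vector with isotropy~$\g{so}_{n-1}$, and then to deduce the three equivalences from our earlier linear-algebraic and curvature lemmas.

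For the inclusion, let $Q = a X_{25} + b X_{16} \in \Ls$. Routine $\g{so}_{n+2}$ bracket calculations give $[Y, Q] = (b-a)V + (a-b)X_{56}$ and, substituted into~(\ref{f}) together with the relevant inner products, yield $R(Q, Y)Y = -\tfrac{5}{4}(b-a)\, JY$. The $R$-invariance of $\Ls$ therefore forces $JY \in \Ls$ whenever $a \neq b$. A short case analysis over the subcases $a = b$ and $a \neq \pm b$ then shows that if $Q \notin \R JY$, we necessarily have $X_{25}+X_{16} \in \Ls$. The decisive geometric observation is that $X_{25}+X_{16}$ commutes with $Y$, and, viewed as matrices in the model $\Lp \cong M_{2\times n}(\R)$ of Subsection~2.1, both $Y$ and $X_{25}+X_{16}$ have singular values $(1,1)$, so they lie in the same $K$-orbit of type $\pi/4$. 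Thus $\Span\{Y, X_{25}+X_{16}\}$ is a $2$-dimensional abelian subspace of $\Lp$, $K$-conjugate to $\La$. But the sum $Y+(X_{25}+X_{16}) \in \Ls$ corresponds to a rank-one matrix with singular values $(2,0)$, i.e., to a type-$0$ singular vector $K$-conjugate to $2X_{13}$. This contradicts the standing hypothesis, so $Q \in \R JY$.

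For the equivalences I prove the cycle (3)$\Rightarrow$(1)$\Rightarrow$(2)$\Rightarrow$(3). The implication (3)$\Rightarrow$(1) is immediate since $Y \in \Ls \cap \Lp$. For (1)$\Rightarrow$(2) I apply Lemma~\ref{linalg2} with $T = R_Y$ and $S = -\nabla Y^*|_o$: scaling Table~\ref{1} and~(\ref{r-hv}) by $|Y|^2 = 2$ shows that $V$ and $JY$ are eigenvectors of $R_Y$ with distinct eigenvalues $\tfrac{1}{2}$ and $\tfrac{5}{2}$, while a brief bracket computation gives $S(JY) = -\tfrac{1}{2}[Y, JY]_{\Lm} = V$; Lemma~\ref{linalg2} then yields $V \in \Ls$. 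Finally, (2)$\Rightarrow$(3) follows at once from Lemma~\ref{vxm}. The main obstacle is the singular-vector-and-flats argument of paragraph two: one must recognize via the matrix model that $Y$ and $X_{25}+X_{16}$ lie in the same $K$-orbit of type $\pi/4$ yet their sum collapses to a different orbit, producing the forbidden type-$0$ vector. Everything else reduces to routine bracket computations in $\g{so}_{n+2}$ or direct appeals to the previously established Lemmas~\ref{linalg2} and~\ref{vxm}.
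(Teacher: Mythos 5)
Your proposal is correct and follows essentially the same route as the paper: the paper obtains the lemma in one line by noting that $X_{15}+X_{26}$ is $H$-conjugate to $X_{13}+X_{24}=\sqrt2 X$ and transporting Lemmata~\ref{sxm} and~\ref{vxm}, whereas you unwind that conjugation and verify the same two ingredients explicitly --- the commuting-pair/flat argument producing a forbidden rank-one (type-$0$) singular vector, and the $V\leftrightarrow JY$ linkage via Lemma~\ref{linalg2} together with Lemma~\ref{vxm}. Your computations (in particular $[Y,Q]=(b-a)V+(a-b)X_{56}$, $R(Q,Y)Y=-\tfrac54(b-a)JY$, and the eigenvalues $\tfrac12$, $\tfrac52$ of $R_Y$ on $V$, $JY$) all check out.
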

\begin{proof}[Proof of Theorem~\ref{tg}]

Proceeding by induction and applying the ideas of Lemmas~\ref{36} and~\ref{37}, we deduce that, up to conjugation either
\[
\Ls=\Span \{ X_{12}, X_{13}+X_{24},X_{23}-X_{14},\ldots, X_{1,2k-1}+X_{2,2k},X_{2,2k-1}-X_{1,2k} \},
\]
or
\[
\Ls=\Span \{ X_{13}+X_{24},\ldots,X_{1,2k-1}+X_{2,2k} \},
\]
for some $k=2,\ldots,[\frac n2]+1$. The corresponding totally geodesic submanifold is the complex Hopf-Berger sphere
\[
\Sigma=\SU{k}/\SU{k-1} \subset \SO{n+2}/\SO{n}=M,
\]
and its maximal totally real submanifold, a round sphere $S^{k-1}$. From~\cite[Subsection~2.4]{OR} and the general fact that the sectional curvature $\sec_g$ for a metric $g$ satisfies $\sec_g = r^2 \sec_{r^2 g}$ for all $r > 0$, we obtain the following equalities for a vertical vector $V$ and horizontal vectors $X$ and $Y$ of the complex Hopf-Berger sphere $\Sigma=S^{k-1}_{\C,\tau}(r)$:
\[
	\tau = \sec_{g_{\tau}}(V, X) = r^2 \sec_{r^2 g_{\tau}}(V, X), \quad
	4 - 3\tau = \sec_{g_{\tau}}(X, Y) = r^2 \sec_{r^2 g_{\tau}}(X, Y).
\]
 The sectional curvatures of the complex Hopf-Berger sphere $\Sigma$ at the $2$-planes $\Span\{V,X\}$ and $\Span\{X,JX\}$ are given by $1/4$ and $5/4$, as follows from Table~\ref{1} for $t=\pi/4$. Hence, $r=\sqrt{2}$ and $\tau=1/2$, and thus $\Sigma$ is isometric to~\smash{$S^{k-1}_{\mathbb C,\frac12}(\sqrt2)$}.
This completes the proof of Theorem~\ref{tg}.\qedhere
\end{proof}

\begin{prop}\label{prop:noho}
	Let $M = V_2(\R^{n+2})$, where $n\geq2$, equipped with a normal homogeneous metric.
	Consider a totally geodesic embedding of~$\R {\mathrm P}^2$ in~$V_2(\R^3)=\R {\mathrm P}^3$, and a totally geodesic embedding of~$V_2(\R^3)$ into~$M$. Then $\R {\mathrm P}^2$ is not extrinsically homogeneous in~$M$. Conversely, any non-extrinsically homogeneous totally geodesic submanifold in~$M$ is congruent to one of these.
\end{prop}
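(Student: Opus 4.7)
The plan for the forward direction is to show that the subgroup of $\mathrm{Isom}(M)$ preserving the given $\R {\mathrm P}^2$ has identity component too small to act transitively on it. We identify $V_2(\R^3) \cong \SO 3$ by sending an orthonormal pair $(v_1,v_2)$ to the matrix with columns $(v_1,v_2,v_1 \times v_2)$; under this identification the induced normal metric on $V_2(\R^3)$ is bi-invariant on $\SO 3$, which up to rescaling is the constant-curvature metric on $\R {\mathrm P}^3$, and the given totally geodesic $\R {\mathrm P}^2$ corresponds to the conjugacy class of involutions (rotations by $\pi$) in $\SO 3$. Since $\R {\mathrm P}^2$ is not one of the maximal totally geodesic submanifolds listed in Theorem~\ref{tg}, its totally geodesic hull in $M$---well-defined as the intersection of all totally geodesic submanifolds of $M$ containing $\R {\mathrm P}^2$, using that intersections of Tojo subspaces are again Tojo subspaces---has dimension at least $3$, and being contained in $V_2(\R^3)$ it must equal $V_2(\R^3)$. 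Hence the identity component of the stabilizer of $\R {\mathrm P}^2$ in $\mathrm{Isom}(M)$ preserves $V_2(\R^3)$, and we reduce to analysing the restricted action on $V_2(\R^3) = \SO 3$.

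The isometry group $\mathrm{Isom}(V_2(\R^3))_0 \cong \SO 3 \times \SO 3$ acts on $\SO 3$ by $(A,B) \cdot M = AMB^{-1}$. A direct check shows that the restriction to $V_2(\R^3)$ maps the identity component of the stabilizer of $V_2(\R^3)$ in $\mathrm{Isom}(M) = \SO{n+2} \cdot \SO 2$ onto the $4$-dimensional subgroup
\[
G' = \SO 3 \times \SO 2 \subset \SO 3 \times \SO 3,
\]
with the second $\SO 2$ factor embedded in $\SO 3$ as the block $\diag(\SO 2, 1)$, arising from rotation of the orthonormal frame. The stabilizer of the involution locus in $\SO 3 \times \SO 3$ is exactly the diagonal $\Delta \SO 3$: requiring $(AMB^{-1})^2 = I$ for every involution $M$ forces $B^{-1}A$ to commute with all involutions, and hence to lie in the trivial center of $\SO 3$. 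The diagonal $\Delta \SO 3$ acts on $\R {\mathrm P}^2$ by conjugation $R_v \mapsto R_{Av}$, transitively. Intersecting with $G'$ yields only $\Delta \SO 2$, which is one-dimensional and so cannot act transitively on the two-dimensional $\R {\mathrm P}^2$. This proves that $\R {\mathrm P}^2$ is not extrinsically homogeneous in $M$.

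For the converse, we use Theorem~\ref{tg} and induction on the maximal totally geodesic submanifold containing $\Sigma$. If $\Sigma \subset S^p(1) \times S^q(1)$, then $\Sigma$ is a product of round subspheres, extrinsically homogeneous via the natural action of $\SO{p'+1} \times \SO{q'+1} \subset \SO{n+2}$. If $\Sigma$ lies in a complex Hopf-Berger sphere, we invoke the classification in~\cite{OR} and verify case by case that each such $\Sigma$ is an orbit of a subgroup of $\mathrm{Isom}(M)$. If $\Sigma \subset V_2(\R^k)$ for some $3 \le k \le n+1$, we induct on $k$ down to the base case $V_2(\R^3) = \R {\mathrm P}^3$, whose totally geodesic submanifolds are points, geodesic circles, totally geodesic $\R {\mathrm P}^2$'s, and $\R {\mathrm P}^3$ itself; all but the $\R {\mathrm P}^2$'s are extrinsically homogeneous in $M$ (points trivially, geodesics as integral curves of Killing fields since $M$ is a g.o.\ space, and $\R {\mathrm P}^3$ as an $\SO{n+2}$-orbit). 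Uniqueness up to congruence follows because $\SO{n+2}$ acts transitively on the embedded copies of $V_2(\R^3)$ in $M$, and the left action of $\SO 3$ on itself is transitive on the totally geodesic $\R {\mathrm P}^2$ submanifolds of $\SO 3$, these being the left translates of the involution locus.

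The main technical obstacle is the Hopf-Berger step of the converse: \cite{OR} does exhibit totally geodesic submanifolds of complex Hopf-Berger spheres that are not extrinsically homogeneous with respect to the intrinsic isometry group, so verifying that each such submanifold becomes an orbit in $M$ under the larger ambient isometry group $\SO{n+2} \cdot \SO 2$ requires a careful case-by-case analysis.
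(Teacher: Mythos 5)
Your forward direction takes a genuinely different route from the paper's, and its core computation is sound: the paper reduces to a subgroup $L$ of $U=\SO3\times\SO2$ with diagonal circle isotropy $U_o$ and derives a contradiction from rank (if $U_o\subset L$ then $L$ would be a $3$-torus in $U$) or from topology (otherwise $L\cdot o$ would be a finite quotient of $T^2$, which is not $\R {\mathrm P}^2$), whereas you compute the stabilizer of the involution locus inside $\mathrm{Isom}(\SO3)_0=\SO3\times\SO3$ to be the diagonal and intersect it with the $4$-dimensional subgroup that extends to $M$; that dimension count is a perfectly good, arguably more transparent, alternative. However, your justification of the key reduction --- that the identity component of the stabilizer of $\R {\mathrm P}^2$ preserves $V_2(\R^3)$ --- is broken. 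The ``totally geodesic hull'' of $\R {\mathrm P}^2$, defined as the intersection of \emph{all} totally geodesic submanifolds containing it, is $\R {\mathrm P}^2$ itself, since $\R {\mathrm P}^2$ is one of the submanifolds being intersected; it is therefore $2$-dimensional, and the fact that $\R {\mathrm P}^2$ is absent from the list of \emph{maximal} totally geodesic submanifolds in Theorem~\ref{tg} does not make the hull any bigger. The reduction is nevertheless true and can be repaired, e.g.\ by observing that $V_2(W)$ is the \emph{unique} totally geodesic submanifold of its congruence class containing the given $\R {\mathrm P}^2$ (the $3$-plane $W$ is recovered as the span of all frame vectors occurring in $\R {\mathrm P}^2$), so every isometry preserving $\R {\mathrm P}^2$ preserves $V_2(W)$. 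A smaller slip: the $\R {\mathrm P}^2$ through the base point $o=e$ is a left translate of the involution locus, not the locus itself (the identity is not an involution); conjugating the diagonal by that translate still meets $\SO3\times\SO2$ in a one-dimensional group, so the conclusion survives, but it should be said.

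In the converse there is a genuine gap which you yourself flag: the Hopf--Berger case is left as an unperformed ``case-by-case analysis.'' The paper disposes of it by asserting that the results of~\cite{OR} imply every totally geodesic submanifold of the complex Hopf--Berger sphere $S^{n+1}_{\C,\frac12}(\sqrt2)$ is extrinsically homogeneous; the exotic non-homogeneous examples alluded to in the introduction occur for other Hopf--Berger parameters and do not arise here. Without either invoking that fact for this specific space or actually carrying out the verification, your converse is incomplete precisely at the step you identify as the obstacle. Two further points: totally geodesic submanifolds of $S^p(1)\times S^q(1)$ are not all products of subspheres (there are diagonal spheres), although they are all extrinsically homogeneous because the product is a symmetric space whose isometries extend to $M$; and your induction through $V_2(\R^k)$ silently uses that $\mathrm{Isom}(V_2(\R^k))_0=\SO{k}\cdot\SO2$ extends to $M$ for $k\ge4$ --- exactly the statement of Lemma~\ref{lemma:stis} whose failure at $k=3$ produces the example --- so it deserves to be made explicit, as the paper does.
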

\begin{proof}
	Let us consider the standard embedding of $V_2(\R^3)=\SO3$ into $V_2(\R^{n+2})=\SO{n+2}/\SO n$ such that $\SO3$~is embedded as the upper left diagonal block of~$\SO{n+2}$, and that $\R {\mathrm P}^2$ contains~$o$. Suppose that $\R {\mathrm P}^2$~is the orbit of a subgroup~$L$ of the isometry group of $V_2(\R^{n+2})=\SO{n+2}/\SO n$ through~$o$. We may assume that $L$ is closed and connected, so that $L\subset \SO{n+2}\times \SO2$. Since $L\cdot o \subset V_2(\R^3)$, it is clear that we may also assume that $L$~is a subgroup of $U=\SO3\times \SO2$, where the $\SO3$-factor is the upper diagonal block of~$\SO{n+2}$. Note that the isotropy group $U_o$ is a circle diagonally embedded in $\SO3\times \SO2$. Assume first $U_o\subset L$. Then $\dim L=3$. Consider the projection $\pi_2 \colon L\subset \SO3\times \SO2\to \SO2$ onto the second factor. Then $\pi_2$~is onto, since $U_o$~is diagonally embedded, and $\ker\pi_2$ is a closed and normal $2$-dimensional subgroup of~$L$. This implies that $L$~is a~$3$-torus contained in~$U$, a contradiction. Now assume $U_o\not\subset L$.  Then $\dim L=2$, which implies that $L$~is abelian, so it is a maximal torus of~$U$ and $\R {\mathrm P}^2=L\cdot o$~is diffeomorphic to a finite quotient of~$T^2$; this is a contradiction since these spaces have non-diffeomorphic universal coverings.

	To show the uniqueness, assume that $\Sigma$ is a totally geodesic submanifold of~$M$.
	Then $\Sigma$ is included in at least one maximal totally geodesic submanifold~$\bar \Sigma$ of~$M$ as given by Theorem~\ref{tg}.
	Note that, with the exception of the inclusion $V_2(\R^3) \subset V_2(\R^4)$, it is true for all maximal totally geodesic submanifolds~$\bar \Sigma$ of~$M$ that any isometry in the connected component of the isometry group of~$\bar \Sigma$ extends to an isometry of~$M$, see Lemma~\ref{lemma:stis} and Subsection~\ref{refl}.
	If $\bar \Sigma$ is a product of round spheres, and hence symmetric, then any totally geodesic submanifold of~$\bar \Sigma$ is extrinsically homogeneous.
	If $\bar \Sigma$ is a complex Hopf-Berger sphere, then it follows from the results of~\cite{OR} that any totally geodesic submanifold of~$\bar \Sigma$ is extrinsically homogeneous.
	It remains the case where $\bar \Sigma$ is a totally geodesic Stiefel manifold~$V_k(\R^{n+2})$ of lower dimension.
	Provided that $n\ge2$, we can use the same argument recursively,  and we will either end up with an extrinsically homogeneous totally geodesic submanifold, or $\Sigma$ is as described in the first part of the proof.
\end{proof}
\section{Polar actions on the Stiefel manifold of real orthonormal two-frames}

\begin{lem}\label{lm:toru}
  Let $L \subseteq \SO{n+2}\times\SO2=\mathrm{Isom}(V_2(\R^{n+2}))^0$ be a closed connected subgroup acting isometrically and with orbits of dimension at least two on~$V_2(\R^{n+2})$.
Then $L$ is conjugate to a subgroup of~$\SO{n+2} \times \SO2$ such that the tangent space of the $L$-orbit through~$o$ contains a vertical vector.
\end{lem}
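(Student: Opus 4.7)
The plan is to translate the desired conclusion into a Lie-algebraic condition and then derive a contradiction via a dimension count. Since the left action of $\SO{n+2}$ alone is transitive on~$M$, it suffices to exhibit some point $p\in M$ with $V_p\in T_p(L\cdot p)$; any such $p$ can then be brought to~$o$ by conjugating $L$ with a suitable isometry.

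The first key observation is that $V_p\in T_p(L\cdot p)$ is equivalent to the entire fiber $F_{\pi(p)}$ of $\pi\colon M\to B$ being contained in~$L\cdot p$. Indeed, because the right $\SO{2}$-factor commutes with~$L$, the vertical Killing field~$V$ is $L$-invariant, so if $V$ is tangent to the orbit at one point then it is tangent at every point, and the flow of~$V$ (which integrates to the fiber) preserves the orbit. Consequently, the lemma reduces to showing that for some $2$-plane $b\in B$ the setwise stabilizer $L_{\pi(b)}=L\cap\mathrm{Stab}(b)$ acts transitively on the fiber $F_b\cong S^1$. At the Lie algebra level, this is the statement that the homomorphism
\[
\rho_b\colon \mathfrak{l}\cap\bigl(\mathfrak{so}(b)\oplus\mathfrak{so}(b^\perp)\oplus\mathfrak{so}_2\bigr)\longrightarrow\R
\]
obtained from the action on $F_b$ must be nonzero for some~$b$.

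I would then argue by contradiction, assuming $\rho_b\equiv 0$ for every $2$-plane $b$. A direct differentiation shows that the Killing field of $(Y,zX_{12})\in\mathfrak{l}$ at a point $(u,v)\in F_b$ is $(Yu-zv,\,Yv+zu)$; for $Y$ preserving $b$ one has $Yu=qv$ and $Yv=-qu$ with $q=\langle Yu,v\rangle$, so this Killing field reduces to $(z-q)\,(-v,u)$, a scalar multiple of the vertical vector. The functional $\rho_b$ is therefore $(Y,zX_{12})\mapsto z-\langle Yu,v\rangle$. Restricting the vanishing hypothesis to $z=0$ and $Y\in\mathfrak{l}_{0}:=\mathfrak{l}\cap\mathfrak{so}_{n+2}$ then gives: any $Y\in\mathfrak{l}_{0}$ which preserves a $2$-plane $b$ must act as $0$ on~$b$.

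The decisive step is the observation that any nonzero skew-symmetric matrix, by its canonical block-diagonal form, preserves at least one $2$-plane on which it acts as a nonzero rotation; this forces $\mathfrak{l}_{0}=0$. Consequently $\mathfrak{l}$ injects into $\mathfrak{so}_2=\R$ under the second projection, so $\dim\mathfrak{l}\le 1$, contradicting the hypothesis that $L$ has orbits of dimension at least two. I expect the step requiring the most care to be the initial equivalence $V_p\in T_p(L\cdot p)\Leftrightarrow F_{\pi(p)}\subseteq L\cdot p$, which rests on the $L$-invariance of~$V$ coming from commutativity of~$L$ with the right $\SO{2}$-action; once this is in place, the remainder is a short direct computation.
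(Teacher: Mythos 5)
Your proof is correct and rests on the same two ingredients as the paper's own argument: the kernel $\mathfrak l_0$ of the projection of $\mathfrak l$ onto the $\mathfrak{so}_2$-factor must be positive-dimensional because the orbits have dimension at least two, and any nonzero skew-symmetric matrix rotates some $2$-plane nontrivially (canonical form), which yields a vertical Killing vector at any frame spanning that plane. You organize this as a proof by contradiction with an explicit computation of the vertical component $\rho_b$ of the Killing fields, while the paper argues directly via a maximal torus of $\ker(\pi_1|L)$ conjugated into block form, but the mathematical content is essentially identical.
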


\begin{proof}
Let $\pi_1 \colon \SO{n+2}\times\SO2 \to \SO2$ be the projection onto the second factor. Let $L'$ be the kernel of~$\pi_1|L$. Then $L'$ is a closed subgroup 
of~$\SO{n+2}$ of positive dimension. After conjugation, we may assume that the maximal torus of~$L'$ consists of matrices of the form ($r=[\frac n2]$)
\[
\begin{pmatrix}
 \rotmat{t_1}  &  &  \\
  & \ddots &  \\
  &  & \rotmat {t_r} \\
\end{pmatrix},
\]
if $n$ is even and of the form
\[
\begin{pmatrix}
 \rotmat{t_1}  &  &  &  \\
  & \ddots &  &  \\
  &  & \rotmat {t_r} &  \\
  &  &  &  \onemat \\
\end{pmatrix},
\]
if $n$ is odd and where in both cases $t_1, \dots, t_r \in \R$.
Since the dimension of~$L'$ is positive, we may assume, possibly after conjugation of~$L'$ in~$\SO{n+2}$, that there are elements in~$L'$ as above where $t_1$ runs through all real numbers. It follows that there is a one-parameter subgroup of~$L'$ such that the tangent space to the orbit through~$o$ spans~$\Lz$.
\end{proof}

\begin{proof}[Proof of Theorem~\ref{pa}]
If $M=V_2(\R^{n+2})$ is equipped with an $\SO{n+2}$-invariant metric, then $\Lm=\Lz + \Lp$ is a decomposition into $\SO n$-invariant subspaces, where $\SO n$ acts through two copies of its standard representation on~$\Lp$ and trivially on~$\Lz$. It follows that $\Lz \perp \Lp$ for all $\SO{n+2}$-invariant metrics on~$M$.

Note first that a polar action on~$V_2(\R^{n+2})$ cannot have one-dimensional principal orbits, since otherwise the sections would be totally geodesic hypersurfaces, which contradicts the main result of~\cite{tsukada}.
Thus Lemma~\ref{lm:toru} implies that any polar action on~$M = V_2(\R^{n+2})$, $n \ge2$ has a section which is horizontal at one point and hence everywhere by~Lemma~\ref{t-r}. Therefore, the action is as described 
as in~\cite{mucha} and its orbits are preimages of orbits of a polar action 
on the corresponding Grassmannian~$G_2(\R^{n+2})$.
In particular, the action is hyperpolar and the cohomogeneity is less than or equal to~$2$.
\end{proof}

\section{Homogeneous hypersurfaces of the\\ real, complex and quaternionic Stiefel manifolds}

In this last section we will consider all of the real, complex and quaternionic Stiefel manifolds
\begin{equation}\label{eq:stmf}
V_k(\R^n) = \frac{\SO n}{\SO{n-k}}, \quad
V_k(\C^n) = \frac{\SU n}{\SU{n-k}}, \quad
V_k(\H^n) = \frac{\Sp n}{\Sp{n-k}}.
\end{equation}
whose elements are $k$-tuples of orthonormal vectors in~$\R^n$, $\C^n$, or~$\H^n$.
We will determine the homogeneous hypersurfaces of these manifolds, or, in other words,
we will classify all isometric actions of compact 
connected Lie groups of cohomogeneity one, up to orbit equivalence,
and prove Theorem~\ref{thm:hhls}.

\begin{rmk}
\label{rmk:irreducible}
The irreducibility of the metric induced by~(\ref{eq:metric}) on $M=\SO{n+2}/\SO{n}$ is probably well-known but we include a proof for completeness. First notice that if $n\neq 2$, the Lie group $\SO{n+2}$ is simple, thus $M$ is irreducible by \cite[Ch.~X Cor.~5.4]{kn}.

Suppose $n=2$. Then, the subspace $\g{m}$ is spanned by $\{X_{12}, X_{13}, X_{14}, X_{23}, X_{24}\}$. 
Now recall that $R$ can be regarded as a linear endomorphism of $\Lambda^2(\g{m})$. Using equations~\eqref{r-h} and~\eqref{r-hv}, we compute
\begin{equation}
\label{eq:computationcurvature}
\begin{aligned}
R(X_{12}\wedge X_{13})&=\frac{1}{4} X_{12}\wedge X_{13},& R(X_{12}\wedge X_{14})&=\frac{1}{4} X_{12}\wedge X_{14},\\
R(X_{12}\wedge X_{23})&=\frac{1}{4} X_{12}\wedge X_{23},& R(X_{12}\wedge X_{24})&=\frac{1}{4} X_{12}\wedge X_{24}.
\end{aligned}
\end{equation}
It is clear that these operators generate a Lie algebra isomorphic to~$\g{so}_5$, so it follows from the Ambrose-Singer Theorem that $M$ is an irreducible Riemannian manifold (with generic holonomy).
\end{rmk}

To find all isometric cohomogeneity one actions on a Stiefel manifold, we can assume that the action is given by a closed connected subgroup of the isometry group.  Therefore, it is sufficient to study connected subgroups of the connected component of the isometry group of the Stiefel manifold.

\begin{lem}\label{lemma:stis}
The connected component of the isometry group of a Stiefel manifold $V_k(\F^n)$, $\F = \R, \C, \H$, $1 \neq k \neq n-1$,  equipped with a normal homogeneous metric, is of the form $G \cdot K_1$ (almost direct product), where
\begin{align*}
G&=\SO n,& K_1 &= \SO{k} & \mbox{if}\;\; \F &= \R, \\
G&=\SU n,& K_1 &= \SU{k}\cdot\U1 & \mbox{if}\;\; \F &= \C, \\
G&=\Sp n,& K_1 &= \Sp{k} & \mbox{if}\;\;\F &= \H,
\end{align*}
where the action is given by~(\ref{eq:stac}).
\end{lem}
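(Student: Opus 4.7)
The plan is to establish the inclusion $G \cdot K_1 \subseteq \mathrm{Isom}(M)^0$ first, and then to show equality by analyzing the isotropy at the basepoint $o$.

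For the first inclusion, $G$ acts isometrically by left translations since the metric is normal homogeneous. In each of the three cases, $K_1$ is embedded in $G$ as an appropriate block centralizing $H$; in the complex case one includes the central $\U{1}$-factor of $\U{k}$ acting as scalars on the first $k$ coordinates, corrected by its inverse on the last $n-k$ coordinates so as to lie in $\SU{n}$. Since $K_1$ centralizes $H$, right translations by elements of $K_1$ descend to well-defined self-maps of $M = G/H$, and these are isometries because the bi-invariant metric on $G$ is right-$K_1$-invariant. The $K_1$-action commutes with the $G$-action, so together they define an action of the almost direct product $G \cdot K_1$. The intersection $G \cap K_1$ inside $\mathrm{Isom}(M)^0$ is finite and central, and a direct verification identifies $K_1$ with $N_G(H)^0 / H$.

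For the reverse inclusion $\mathrm{Isom}(M)^0 \subseteq G \cdot K_1$, I would show that the isotropies $(\mathrm{Isom}(M)^0)_o$ and $(G \cdot K_1)_o$ coincide; combined with transitivity on both sides, this yields equality of groups. Any $\varphi \in (\mathrm{Isom}(M)^0)_o$ gives an orthogonal transformation $d\varphi$ of $\g{m} = T_oM$ that commutes with the isotropy action of $H$ and preserves the curvature tensor. I would decompose $\g{m}$ into $H$-isotypic components: a trivial summand corresponding to the vertical subspace of the fibration $V_k(\F^n) \to G_k(\F^n)$, isomorphic to $\g{so}_k$, $\g{u}_k$, or $\g{sp}_k$; and several copies of the standard representation of $H$ on $\F^{n-k}$, forming the horizontal subspace. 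By Schur's lemma, the group of $H$-commuting orthogonal transformations of $\g{m}$ is the product of an orthogonal group on the trivial summand with a group of type $\O{k}$, $\U{k}$, or $\Sp{k}$ permuting the copies of the standard representation. Curvature preservation, controlled for normal homogeneous spaces by the Lie bracket $[\g{m},\g{m}] \subseteq \g{g}$, cuts this group down to precisely the isotropy representation of $H \cdot K_1$.

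The main obstacle I foresee is the last step: showing that an $H$-equivariant orthogonal transformation of $\g{m}$ which also preserves brackets must act on the trivial summand as the adjoint action of some element of $K_1$, rather than as an arbitrary orthogonal transformation. This is case-dependent and exploits the way the bracket couples the vertical and horizontal directions, in the same spirit as the use of O'Neill's $A$-tensor earlier in the paper. The exclusions $k = 1$ and $k = n-1$ are natural: in the former case the trivial summand is absent and $M$ can be a sphere with a larger isometry group, while in the latter the horizontal piece degenerates and $M$ becomes a Lie group with a bi-invariant metric, whose isometry group includes an extra $G$-factor acting by right translations.
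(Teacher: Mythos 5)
The paper's own proof is a two-line citation: it first checks that the metric is irreducible (Remark~\ref{rmk:irreducible} for $V_2(\R^4)$, simplicity of the transitive group otherwise) and then invokes Reggiani's general computation of the full isometry group of compact irreducible naturally reductive homogeneous spaces \cite{reggiani}. Your proposal instead attempts a direct determination of the isotropy group, which is a legitimate strategy in principle, but as written it contains a genuine gap and, separately, omits the core computation.

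The gap is the assertion that for an arbitrary $\varphi \in (\mathrm{Isom}(M)^0)_o$ the differential $d\varphi_o$ commutes with the linear isotropy action of~$H$ on $\g{m}$. What is actually true is that $d\varphi_o$ normalizes the linear isotropy representation of the \emph{full} isotropy group at~$o$ --- but that group is precisely the unknown object, and there is no a priori reason why conjugation by $d\varphi_o$ should preserve the subgroup $\mathrm{Ad}(H)|_{\g{m}}$ sitting inside it. (It does for elements of $(G\cdot K_1)_o$ because $K_1$ centralizes $H$, but that is the inclusion you already have.) Without equivariance, Schur's lemma does not apply and you would have to bound the stabilizer of the curvature tensor inside all of $\O{\dim\g{m}}$, or repair the argument differently, e.g.\ by decomposing the isotropy algebra $\g{i}_o\supseteq\g{h}$ as an $\g{h}$-module and studying the linearization $X\mapsto\nabla X$ on an invariant complement. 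Beyond this, the step you yourself flag as the ``main obstacle'' --- showing that curvature preservation cuts the centralizer $\O{\dim\g{z}}\times\O{k}$ (resp.\ $\U{k}$, $\Sp{k}$) down to exactly the image of $H\cdot K_1$, in particular excluding everything in $\O{k(k-1)/2}$ on the vertical summand $\g{z}\cong\g{so}_k$ except the adjoint image of $\SO{k}$ coupled diagonally with the horizontal action --- is the actual mathematical content of the lemma and is not carried out. As it stands the proposal is an outline containing one unjustified intermediate claim, not a proof; if you want to avoid the case-by-case curvature computation, the efficient route is the one the paper takes, namely irreducibility plus the cited structure theorem for isometry groups of naturally reductive spaces.
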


\begin{proof}
Note that the spaces in the statement of the lemma are irreducible by Remark~\ref{rmk:irreducible} and \cite[Ch.~X Cor.~5.4]{kn}.
Now the statement follows from~\cite[Corollary~1.3]{reggiani}.
\end{proof}

From now on we will assume without loss of generality that isometric Lie group actions on a Stiefel manifold are given by closed subgroups of~$G \times K_1$ (direct product). The following result allows us to derive the classification of cohomogeneity one actions on Stiefel manifolds from the known classifications of cohomogeneity one~\cite{kollross} and transitive actions~\cite{oniscik} on Grassmannians.

\begin{lem}\label{proj}
Let $M = V_k(\F^n)$, $\F = \R, \C, \H$ be a Stiefel manifold and let $B = G_k(\F^n) = G/K$ be the corresponding Grassmannian, where $G = \SO n$, $\SU n$, 
$\Sp n$. Suppose there is an isometric Lie group action of cohomogeneity one on~$M$ given by a closed subgroup~$L$ of~$G \times K_1$. Let $\pi_1 \colon G \times K_1 \to G$ be the projection onto the first factor. Then the following statements hold:
\begin{enumerate}
	\item The subgroup  $\pi_1(L)$ acts isometrically on~$B$ and this action is either of cohomogeneity one or transitive.
	\item  $\pi_1(L) \neq G$.
\end{enumerate}
\end{lem}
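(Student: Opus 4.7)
For (i), I would exploit the Riemannian submersion $\pi \colon M \to B$. Because $K_1 \subseteq K$, we have $gkK = gK$ for every $g \in G$ and $k \in K_1$, so the $L$-action on $M$ descends to an action of $\pi_1(L)$ on $B$ making $\pi$ equivariant. This descended action is isometric since $\pi$ is a Riemannian submersion and $L$ acts by isometries. To control the cohomogeneity, let $F_p := \pi^{-1}(\pi(p))$ and observe that for any orbit $L \cdot p$ the restriction of $\pi$ gives
\[
\dim(L \cdot p) = \dim \pi(L \cdot p) + \dim(L \cdot p \cap F_p) \leq \dim \pi(L \cdot p) + \dim K_1,
\]
since $F_p$ has dimension $\dim M - \dim B = \dim K_1$. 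Choosing $p$ in a principal $L$-orbit, so that $\dim(L \cdot p) = \dim M - 1$, we deduce $\dim \pi(L \cdot p) \geq \dim B - 1$, which forces the $\pi_1(L)$-action on $B$ to be either transitive or of cohomogeneity one.

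For (ii), the plan is to prove the contrapositive: if $\pi_1(L) = G$, then $L$ must already act transitively on $M$, contradicting the cohomogeneity-one hypothesis. Set $N := L \cap (G \times \{e\})$ and identify it with a closed subgroup of $G$ via $\pi_1$. A first key observation is that $N$ is normal in $G$: given $n \in N$ and $g \in G$, the surjectivity $\pi_1(L) = G$ produces some $k \in K_1$ with $(g, k) \in L$, and
\[
(g, k)(n, e)(g, k)^{-1} = (gng^{-1}, e) \in L,
\]
so $gng^{-1} \in N$. Combining the injection $L/N \hookrightarrow K_1$ induced by $\pi_2$ with the bound $\dim L \geq \dim G$ (from surjectivity of $\pi_1|_L$) yields
\[
\dim N \geq \dim L - \dim K_1 \geq \dim G - \dim K_1 > 0,
\]
where the last strict inequality follows from a direct case-by-case check that $\dim K_1 < \dim G$ for $k \leq n-1$ in each of the three series.

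Thus $N$ is a positive-dimensional closed normal subgroup of the compact connected Lie group~$G$. Whenever $\mathfrak{g}$ is simple, which covers all of our cases except $G = \SO{4}$, this forces $N = G$, so $G \times \{e\} \subseteq L$; since $G \times \{e\}$ already acts transitively on $M = G/H$ by left translation, $L$ is transitive as well, producing the desired contradiction. The main technical obstacle I foresee is the semisimple case $G = \SO{4}$, in which $N$ could a priori equal one of the two normal $\SO{3}$-subgroups; there I would pin down $\mathfrak{l}$ via its Goursat-type data inside $\mathfrak{g} \oplus \mathfrak{k}_1$ (the isomorphism between the quotients $\mathfrak{g}/\mathfrak{n}$ and a subquotient of $\mathfrak{k}_1$) and verify by an explicit computation that the $L$-orbit through $o$ still exhausts~$M$.
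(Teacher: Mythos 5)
Your proof is correct and, at bottom, follows the same route as the paper, though you spell out details that the paper leaves implicit. For part~(i) the paper simply notes that $L \subseteq \pi_1(L)\times K_1$ and that the orbits of $\pi_1(L)\times K_1$ on~$M$ are the full $\pi$-preimages of the $\pi_1(L)$-orbits on~$B$ (since $K_1$ acts transitively on the fibers of~$\pi$), so the two cohomogeneities coincide; your orbit-dimension count through the Riemannian submersion establishes the same inequality and is equally valid. For part~(ii) the paper merely asserts that $\pi_1(L)=G$ forces $L = G \times Q$ with $Q \subseteq K_1$; your Goursat-type argument --- normality of $N = L \cap (G\times\{e\})$ in~$G$, the bound $\dim N \ge \dim G - \dim K_1 > 0$, and simplicity of $\mathfrak g$ --- is precisely the justification that this one-line assertion needs, so you have in effect filled in the step the authors omit. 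The only loose end you leave, the case $G = \SO4$, does not in fact require the explicit Goursat-data computation you propose: in the range where the lemma is used one has $2 \le k \le n-2$, so for $n=4$ the only case is $k=2$ with $K_1 = \SO2$ abelian; by Goursat $G/N$ is isomorphic to a quotient of a subgroup of~$K_1$, hence abelian, and since $\SO4$ is semisimple (equal to its commutator subgroup) this forces $N = G$. With that one observation your argument is complete.
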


\begin{proof}
Note that $L$~is contained in the closed subgroup~$\pi_1(L) \times K_1$ of~$G \times K_1$.
Thus the action of~$L$ on~$M$ is of the same or higher cohomogeneity as the action of~$\pi_1(L) \times K_1$ on~$M$, whose cohomogeneity is equal to that of the action of~$\pi_1(L)$ on~$B$.

For the second part of the statement, we observe that if~$\pi_1(L) = G$, then it follows that $L$ is of the form $G \times Q$, where $Q \subseteq K_1$ is some closed subgroup.
Such a group acts transitively on~$M$, a contradiction.
\end{proof}

In the sequel we assume that $2 \le k \le n-2$ since homogeneous hypersurfaces of spheres are well understood, see~\cite{HL}.
Also, note that $G_k(\F^n)$ is isometric to~$G_{n-k}(\F^n)$, where the isometry is given by mapping a $k$-dimensional linear subspace to its orthogonal complement in~$\F^n$.

\begin{lem}\label{prop:cost}
Let $(L,B)$ be a pair as in Table~\ref{t:pairs}. Then the group $L \times K_1$ acts with cohomogeneity one on the corresponding Stiefel manifold~$V_k(\F^n)$.
Conversely, all homogeneous hypersurfaces of the Stiefel manifold~$V_k(\F^n)$, $k=2, \dots, n-1$, which arise as preimages of homogeneous hypersurfaces of the Grassmannian~$G_k(\F^n)$ or~$G_{n-k}(\F^n)$ under the projection~$\pi$ are given by the pairs $(L,B)$ in
Table~\ref{t:pairs}.
\begin{table}[h]
			\begin{tabular}{|l|l|}
\hline
            $L$ & $B$ \\ 
\hline\hline
			$\SO{n-1}$ & $G_k(\R^n)$ \\
\hline
			$\SUxU{n-1}{1}$ & $G_k(\C^n)$ \\
\hline
			$\Sp{n-1}\times\Sp{1}$ & $G_k(\H^n)$  \\
\hline
			$\U{m}$ & $G_2(\R^{2m})$  \\
\hline
			$\U{m}$ & $G_3(\R^{2m})$  \\
\hline
			\end{tabular}\qquad
			\begin{tabular}{|l|l|}
\hline
            $L$ & $B$ \\ 
\hline\hline
			$\Sp{m}$ & $G_2(\C^{2m})$  \\
\hline
			$\Sp{m}$ & $G_3(\C^{2m})$ \\
\hline
			$\G$ & $G_3(\R^7)$  \\
\hline
			$\Spin9$ & $G_2(\R^{16})$ \\
\hline
			$\Sp{m}\Sp1$ & $G_2(\R^{4m})$ \\
\hline
			\end{tabular}
	\caption{ \str Pairs~$(L,B)$ for which the subgroup~$L$ of~$G$ acts with cohomogeneity one on~$B=G_k(\F^n)$.}\label{t:pairs}
\end{table}
\end{lem}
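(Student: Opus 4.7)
The plan is to establish the two assertions of the lemma in turn, using Kollross' classification of cohomogeneity-one actions on compact irreducible symmetric spaces~\cite{kollross} as the main external input on the Grassmannian side.

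\emph{Direct implication.} Fix a pair $(L,B)$ from Table~\ref{t:pairs}. The subgroup $L\subset G$ is known by~\cite{kollross} to act on the Grassmannian $B=G_k(\F^n)$ with cohomogeneity one. I would transfer this to the Stiefel manifold as follows. Observe first that $K_1$ is contained in the isotropy group $K$ at a $k$-plane, so the right $K_1$-action on $V_k(\F^n)=G/H$ stabilises each fiber of $\pi\colon V_k(\F^n)\to G_k(\F^n)$ setwise; moreover, since the left $L$- and right $K_1$-actions commute, the $(L\times K_1)$-action projects under $\pi$ to the $L$-action on $B$. A dimension computation, comparing $\dim K_1$ with $\dim(K/H)$ (which equals $\dim\O k$, $\dim\U k$, or $\dim\Sp k$ according as $\F=\R,\C,\H$), shows that the $K_1$-orbit through $o$ has full fiber dimension; together with the preceding remark this implies that an $(L\times K_1)$-orbit in $V_k(\F^n)$ has codimension equal to the codimension of the corresponding $L$-orbit in $B$. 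Hence the cohomogeneity on $V_k(\F^n)$ is also one.

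\emph{Converse.} Suppose $S=\pi^{-1}(U)$ is a homogeneous hypersurface of $V_k(\F^n)$, where $U$ is a homogeneous hypersurface of either $G_k(\F^n)$ or $G_{n-k}(\F^n)$ (the two being isometric via orthogonal complementation). Then $U$ is a principal orbit of a cohomogeneity-one action of some closed connected subgroup of $G$ on the Grassmannian. I would specialise Kollross' classification~\cite{kollross} to the three families $G_k(\R^n)$, $G_k(\C^n)$, $G_k(\H^n)$, discard orbit-equivalent duplicates under the duality $G_k(\F^n)\cong G_{n-k}(\F^n)$, and verify that the resulting list of subgroups coincides exactly with the entries $L$ in Table~\ref{t:pairs}.

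The forward direction is essentially immediate once the vertical nature and the fiber-dimension-fullness of the right $K_1$-action are in place. The main effort lies in the converse bookkeeping: one must carefully check that the exceptional cohomogeneity-one items ($\Spin9$ on $G_2(\R^{16})$, $\G$ on $G_3(\R^7)$, and the inclusions $\U m\subset\SO{2m}$, $\Sp m\subset\SU{2m}$, $\Sp m\Sp 1\subset\SO{4m}$), together with the families of classical cohomogeneity-one actions, give precisely Table~\ref{t:pairs} without omission or redundancy.
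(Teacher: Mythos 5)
Your proposal is correct and follows essentially the same route as the paper: cite Kollross' classification of cohomogeneity-one actions on the Grassmannians to produce Table~\ref{t:pairs}, and transfer between $B$ and $M$ using the fact that the right $K_1$-action commutes with the left $L$-action and sweeps out the fibers of~$\pi$, so that $(L\times K_1)$-orbits in $V_k(\F^n)$ have the same codimension as the corresponding $L$-orbits in $G_k(\F^n)$. The paper's proof is terser (it asserts the lift of cohomogeneity in one sentence), whereas you make the fiber-dimension count explicit, which is a reasonable amount of added detail rather than a different method.
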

\begin{proof} 
If a connected subgroup $L$ acts with cohomogeneity one on~$B=G_k(\F^n)$, then $L\times K_1$ acts with cohomogeneity one on $M=V_k(\F^n)$.
We may extract the classification for cohomogeneity one actions on Grassmannians from~\cite[Theorem B]{kollross}, thus yielding Table~\ref{t:pairs}.

Conversely, for $2 \le k \le \frac{n}2$, any closed connected subgroup of~$G$ acting with cohomogeneity one on~$B=G_k(\F^n)$ is after conjugation contained in the subgroup~$L$ of~$G$ from one of the pairs~$(L,B)$ in Table~\ref{t:pairs}.
\end{proof}
	
\begin{lem}\label{rmk:trgr}
Using the notation of Lemma~\ref{lemma:stis}, for all of the following pairs~$(L,B)$, the subgroup~$L$ of~$G$ acts transitively on~$G_k(\F^n)$:
\begin{align*}
(\Spin7, G_2(\R^8)), \;
(\Spin7, G_3(\R^8)), \;
(\G , G_2(\R^7)).
\end{align*}
Conversely, for $2 \le k \le \frac{n}2$, and $n\neq4$ in case~$\F=\R$, if a closed connected proper subgroup of~$G$ acts transitively on~$B=G_k(\F^n)$, then the pair~$(L,B)$ is one of the three given above.
\end{lem}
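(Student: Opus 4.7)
The plan is to reduce the claim to a classification of factorizations of compact simple Lie groups and then appeal to Onishchik's theorem~\cite{oniscik}. A closed connected subgroup $L$ of $G$ acts transitively on $B = G/K$ precisely when $G = L\cdot K$, where $K$ denotes the stabilizer of a base point in the Grassmannian. Hence the task reduces to listing the proper factorizations $G = L\cdot K$ of the simple group $G = \SO n$, $\SU n$, or $\Sp n$ in which $K$ is the Grassmannian isotropy subgroup $\SOxO{n-k}{k}$, $\SUxU{n-k}{k}$, or $\Sp{n-k}\times\Sp{k}$ respectively, with $2\le k\le n/2$.

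For the forward direction, I would verify each of the three listed transitive actions by descending along the sphere fibrations $V_j(\R^n)\to V_{j-1}(\R^n)$. The group $\G$ acts transitively on $S^6$ with isotropy $\SU 3$, and $\SU 3$ in turn acts transitively on the unit sphere $S^5$ of the orthogonal complement of a fixed vector in $\R^7$; hence $\G$ is transitive on $V_2(\R^7)$, and a fortiori on $G_2(\R^7)$. Via the real spin representation, $\Spin 7$ acts transitively on $S^7$ with isotropy $\G$, so the preceding argument gives transitivity on $V_2(\R^8)$, and one further step (using $\SU 3$ transitive on $S^5$) gives transitivity on $V_3(\R^8)$; both descend to transitivity on the respective Grassmannians.

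For the converse, I would invoke Onishchik's classification of factorizations of compact simple Lie groups. Restricting the list to those $G=L\cdot K$ in which $K$ is a Grassmannian isotropy subgroup with $k\ge 2$, one finds: for $\F=\C$ or $\H$ there are no proper factorizations at all, since the only Onishchik factorizations of $\SU n$ or $\Sp n$ with one factor of the form $\SUxU{n-k}{k}$ or $\Sp{n-k}\times\Sp{k}$ occur at $k=1$ (for instance $\SU{2m}=\Sp m\cdot\SUxU{2m-1}{1}$, corresponding to $\Sp m$ being transitive on $\C{\mathrm P}^{2m-1}$); for $\F=\R$, the only proper factorizations $\SO n = L\cdot\SOxO{n-k}{k}$ with $k\ge 2$ come from the exceptional embeddings $\G\subset\SO 7$ and $\Spin 7\subset\SO 8$, and they yield precisely the three pairs in the statement. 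The restriction $n\neq 4$ serves to exclude the case where $\SO 4$ fails to be simple and Onishchik's theorem does not apply directly.

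The main obstacle is a careful extraction of the relevant entries from Onishchik's tables and the verification that low-dimensional isogenies such as $\SU 4\cong\Spin 6$, $\Sp 2\cong\Spin 5$, or the triality of $\Spin 8$ do not yield further factorizations that should be added to the list. Once the isometry $G_k(\F^n)\cong G_{n-k}(\F^n)$ is used to restrict to $k\le n/2$, this bookkeeping is already implicit in Onishchik's classification, so no additional case analysis is needed beyond pointing to~\cite{oniscik}.
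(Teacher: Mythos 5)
Your proposal is correct and follows essentially the same route as the paper, which simply cites Onishchik's classification (Table~7 of~\cite{oniscik}) of transitive actions on Grassmannians; your reformulation in terms of factorizations $G=L\cdot K$ is the standard equivalent statement underlying that table. The explicit verification of the forward direction via the chain of transitive sphere actions ($\Spin7$ on $S^7$ with isotropy $\G$, $\G$ on $S^6$ with isotropy $\SU3$, $\SU3$ on $S^5$) is a welcome self-contained addition that the paper omits.
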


\begin{proof}
See \cite[Table~7]{oniscik} for the classification of transitive actions on Grassmannians.
\end{proof}

\begin{proof}[Proof of Theorem~\ref{thm:hhls}]
Let $L$ be a closed connected subgroup of~$G \times K_1$, where $G$ and~$K_1$ are as in Lemma~\ref{lemma:stis}. Assume $L$ acts with cohomogeneity one on~$M$.
Let $\pi_1 \colon G \times K_1 \to G$ be the projection on the first factor.
Then Lemma~\ref{proj} shows that the group~$\pi_1(L)$ either acts with cohomogeneity one on~$B=G_k(\F^n)$ or transitively; and furthermore, that~$\pi_1(L) \neq G$. 
The case where the action on~$B$ is of cohomogeneity one is treated in Lemma~\ref{prop:cost}.
We may therefore now assume that the group $\pi_1(L)$ acts transitively on~$B$.
It now only remains to decide whether the actions on Grassmannians given 
in Lemma~\ref{rmk:trgr} give rise to cohomogeneity one actions on the corresponding Stiefel manifolds. 
However, it has been proved in~\cite{oniscik} that the actions of~$\Spin7$ on~$V_2(\R^8)$, of~$\Spin7$ on~$V_3(\R^8)$ and of~$\G$ on~$V_2(\R^7)$ are transitive.
It follows by dimension counting that the actions of~$\Spin7$ on~$V_6(\R^8)$, of~$\Spin7$ on~$V_5(\R^8)$ and of~$\G$ on~$V_5(\R^7)$ are of cohomogeneity greater than one.
This completes the proof of the theorem. 
\end{proof}


\bibliography{ref}
\bibliographystyle{amsplain}

\end{document}